\DeclareMathOperator{\Spec}{Spec}
\DeclareMathOperator{\Hom}{Hom}
\DeclareMathOperator{\End}{End}
\DeclareMathOperator{\Ker}{Ker}
\DeclareMathOperator{\Trd}{Trd}
\DeclareMathOperator{\Nrd}{Nrd}
\DeclareMathOperator{\Br}{Br}
\DeclareMathOperator{\Iso}{Iso}
\DeclareMathOperator{\Inv}{Inv}
\DeclareMathOperator{\Id}{Id}
\DeclareMathOperator{\Quad}{Quad}
\newcommand{\inj}{\hookrightarrow}
\newcommand{\isom}{\stackrel{\sim}{\rightarrow}}
\newcommand{\Z}{\mathbb{Z}}
\newcommand{\N}{\mathbb{N}}
\newcommand{\pfis}[1]{\langle\!\langle #1\rangle\!\rangle}
\newcommand{\To}{\longrightarrow}
\newcommand{\fdiag}[1]{\langle #1\rangle}
\newcommand{\tld}{\widetilde}
\newcommand{\eps}{\varepsilon}
\newcommand{\ppq}{\leqslant}
\newcommand{\Zd}{\Z/2\Z}
\renewcommand{\phi}{\varphi}
\renewcommand{\bar}{\overline}
\newcommand{\foncdef}[5]{\begin{array}{rrcl}
#1 : & #2 & \To & #3 \\
 & #4 & \longmapsto & #5
\end{array}}
\newcommand{\anonfoncdef}[4]{\begin{array}{rcl}
#1 & \To & #2 \\
#3 & \longmapsto & #4
\end{array}}
\newtheorem{thm}{Theorem}[section]
\newtheorem{prop}[thm]{Proposition}
\newtheorem{coro}[thm]{Corollary}
\newtheorem{lem}[thm]{Lemma}
\theoremstyle{definition}
\newtheorem{rem}[thm]{Remark}
\author{Nicolas Garrel}
\title{Witt invariants of quaternionic forms}
\date{}
\begin{document}

\maketitle

\section*{Introduction}

In the seminal \cite{GMS}, Serre starts the study of cohomological
invariants, in particuliar of algebraic groups. Let $k$ be a base field,
which in this article we will assume to be of characteristic not $2$.
Then if $\mathbf{Field}_{/k}$ is the category of field extensions of $k$
(it is enough to consider finitely generated extensions, in case one wants
to work in an essentially small category), and $F: \mathbf{Field}_{/k}\to \mathbf{Set}$
and $A: \mathbf{Field}_{/k}\to \mathbf{Ab}$ are functors, then an 
invariant of $F$ with values in $A$ is simply a natural transformation
from $F$ to $A$ (seen as a functor to $\mathbf{Set}$). We write 
$\Inv(F,A)$ for the set of such invariants; this is clearly an abelian 
group for the pointwise addition coming from the abelian group structure
of $A(K)$ for any extension $K/k$.

If $A(K) = H^i(K,M)$ for some Galois module $M$ defined over $k$, we speak of 
cohomological invariants with values in $M$. If $F(K)=H^1(K,G)$ for some algebraic 
group $G$ defined over $k$, we speak of "invariants of $G$" (a more explicit terminology 
would be "invariants of $G$-torsors"). If $G$ is a classical group,
one recovers some familiar functors; in particular, if $G = O_n$ is the
classical orthogonal group, the corresponding $F$ is isomorphic to the 
functor $\mathbf{Quad}_n$ of non-singular $n$-dimensional quadratic forms.
Taking $M = \Z/2\Z$ with trivial Galois action, the classical invariants
of quadratic forms $q$ such as the determinant $\det(q)\in K^\times/(K^\times)^2
\simeq H^1(K,\Z/2\Z)$ and the Hasse invariant in $\Br(K)[2]\simeq H^2(K,\Z/2\Z)$
give examples of cohomological invariants of $O_n$. In \cite{GMS} Serre 
gives a complete description of mod $2$ cohomological invariants of $O_n$,
proving that they form a free module over $H^*(k,\Z/2\Z)$ with basis 
given by the Stiefel-Whitney invariants $w_i$ (the determinant and Hasse 
invariant are then recovered as, respectively, $w_1$ and $w_2$).

Serre also introduces in \cite{GMS} the notion of Witt invariants
(which amounts to taking $A(K)=W(K)$ the Witt group of $K$),
noticing that the formal properties of the Witt group regarding 
residues and specialization with respect to discrete valuations are similar 
enough to cohomology groups that a similar theory can be developped.
For Witt invariants of $O_n$, he proves that they form a free $W(k)$-module
generated by the $\lambda$-operations $\lambda^i$ (for $0\ppq i\ppq n$).

Although there is clearly much more literature devoted to cohomological invariants,
in particular due to their use for rationality problems and computations
of essential dimensions of algebraic groups, the solution of the Milnor
Conjecture by Voevodsky gives a direct connexion between cohomological 
and Witt invariants which can motivate a closer interest in those.
Explicitly, if we can define a Witt invariant $\alpha\in \Inv(F,W)$ with values in 
$I^n\subset W$ (where $I^n$ is the $n$th power of the fundamental ideal of the Witt
group), then composing with the canonical isomorphism 
$I^n(K)/I^{n+1}(K)\isom H^2(K,\Z/2\Z)$ defines a cohomological 
invariant in $\Inv^n(F,\Z/2\Z)$. It is not difficult to realize 
that one recovers all cohomological invariants of $O_n$ 
in this way (see \cite[Section 9]{G3} for instance)

Now we can ask the question of invariants of $G=O(A,\sigma)$ where 
$(A,\sigma)$ is a central simple algebra with orthogonal involution.
This algebraic group is a form of $O_n$ (which corresponds to the case 
where $A=M_n(k)$ and $\sigma$ is the adjoint involution of the form 
$\fdiag{1,\dots,1}$), so we might expect a similar description of invariants.
The pointed set $H^1(K,G)$ is this time is a canonical bijection with the 
set of isometry classes of nondegenerate $1$-dimensional hermitian forms over 
$(A,\sigma)$ (the base point being $\fdiag{1}_\sigma$). We address in this 
article the question of Witt invariants of $O(A,\sigma)$ when $A$ has 
index $\ppq 2$ and degree $2r$ (which is a class stable by scalar extension). 
If $Q$ is the quaternion algebra Brauer-equivalent to
$A$, we can choose a Morita equivalence between $(A,\sigma)$ and $(Q,\gamma)$
where $\gamma$ is the canonical symplectic involution of $Q$,
and this induces an isomorphism between the functor $H^1(-,G)$
and $H_Q^{(r)}$, where for any extension $K/k$, $H_Q^{(r)}(K)$
is the set of isometry classes of nondegenerate anti-hermitian forms 
of reduced dimension $2r$ over $(Q_K,\gamma_K)$. Our main result is then 
Theorem \ref{thm_inv}, which states that the Witt invariants of $H_Q^{(r)}(K)$
(and therefore of $G$) are again generated by $\lambda$-operations (this 
time in the sense of \cite{G2}), but the coefficients have to be taken
not only in $W(k)$ but in the mixed Witt ring $\tld{W}^{-1}(Q,\gamma)$
introduced in \cite{G1}. Furthermore, such a decomposition is not
exactly unique, the norm form $n_Q$ being the obstruction.

\section*{Notations and conventions}

If $Q$ is a quaternion algebra, $\gamma$ is its canonical symplectic involution,
$Q_0$ is the space of pure quaternions, and $Q_0^\times$ the set of 
invertible pure quaternions.

\section{Mixed Witt rings}

\subsection{General case}

Let $(A,\sigma)$ be an Azumaya algebra with involution of the first kind over $k$,
and $\eps\in \mu_2(k)$. Then $SW^\eps(A,\sigma)$ is defined as the monoid of $\eps$-hermitian
forms over $(A,\sigma)$, with orthogonal sums, $GW^\eps(A,\sigma)$ is the Grothendieck 
group of $SW^\eps(A,\sigma)$, and $W^\eps(A,\sigma)$ is the quotient of $GW^\eps(A,\sigma)$
by the subgroup of hermitian forms.

For any $a\in A^\times$ such that $\sigma(a)=\eps a$, the elementary diagonal
form $\fdiag{a}_\sigma$ is 
\begin{equation}
    \foncdef{\fdiag{a}_\sigma}{A\times A}{A}{(x,y)}{\sigma(x)ay.}
\end{equation}
The diagonal form $\fdiag{a_1,\dots,a_r}_\sigma$ is the orthogonal sum 
\begin{equation}
    \fdiag{a_1,\dots,a_r}_\sigma = \fdiag{a_1}_\sigma \perp \dots \perp 
    \fdiag{a_r}_\sigma.
\end{equation}

When $(A,\sigma)=(k,\Id)$ and $\eps=1$ we just write $SW(k)$, $GW(k)$ and
$W(k)$. Note that $W^\eps(A,\sigma)$ is naturally a $W(k)$-module. We define
a $\Zd$-graded $GW(k)$-module 
\begin{equation}
    \tld{GW}^{\eps}(A,\sigma) = GW(k) \oplus GW^{\eps}(A,\sigma)
\end{equation}
and a $\Zd$-graded $W(k)$-module 
\begin{equation}
    \tld{W}^{\eps}(A,\sigma) = W(k) \oplus W^{\eps}(A,\sigma).
\end{equation}

If $(A,\sigma)$ and $(B,\tau)$ are Azumaya algebra with involution of 
the first kind over $k$, and $\eps_0\in \mu_2(k)$, we write 
\begin{equation}\label{eq_morita_eq}
    (B,\tau) \stackrel{(V,h)}{\rightsquigarrow} (A,\sigma) 
\end{equation}
if $V$ is a $B$-$A$-bimodule, balanced over $k$, such that 
$B\simeq \End_A(V)$ for this action, and $h$ is an $\eps_0$-hermitian 
form over $(A,\sigma)$, and $\tau$ is the adjoint involution of $h$.
We then say that (\ref{eq_morita_eq}) is an $\eps_0$-hermitian 
Morita equivalence, or simply a Morita equivalence. Such an equivalence
induces graded isomorphisms $h_*$ which fit in this commutative 
diagram
\begin{equation}\label{eq_diagram_morita}
    \begin{tikzcd}
        \tld{GW}^{\eps}(B,\tau) \rar{h_*} \dar & \tld{GW}^{\eps\eps_0}(A,\sigma) \dar \\ 
        \tld{W}^{\eps}(B,\tau) \rar{h_*}  & \tld{W}^{\eps\eps_0}(A,\sigma) 
    \end{tikzcd}
\end{equation}
such that $h_*$ is the identity on the even components $GW(k)$ and $W(k)$.

In \cite{G1}, a graded commutative ring structure is defined on $\tld{GW}^{\eps}(A,\sigma)$
and $\tld{W}^{\eps}(A,\sigma)$ such that (\ref{eq_diagram_morita}) is 
a commutative diagram of rings. The product is characterized by 
\begin{equation}\label{eq_prod_elem}
    \fdiag{a}_\sigma\cdot \fdiag{b}_\sigma \simeq T_{\sigma,a,b}\in SW(k)
\end{equation}
where $T_{\sigma,a,b}$ is the twisted involution trace form defined as 
\begin{equation}\label{eq_twisted_trace}
    \foncdef{T_{\sigma,a,b}}{A\times A}{k}{(x,y)}{\Trd_A(\sigma(x)ay\sigma(b)).}
\end{equation}

Furthermore, if $K/k$ is a field extension, then the 
scalar extension maps induce a commutative diagram
\begin{equation}\label{eq_diagram_scalar_ext}
    \begin{tikzcd}
        \tld{GW}^{\eps}(B,\tau) \rar{h_*} \dar & \tld{GW}^{\eps\eps_0}(A,\sigma) \dar \\ 
        \tld{GW}^{\eps}(B_K,\tau_K) \rar{(h_K)_*}  & \tld{GW}^{\eps\eps_0}(A_K,\sigma_K) 
    \end{tikzcd}
\end{equation}
and similarly for mixed Witt rings.

If $(A,\sigma)=(k,\Id)$ and $\eps=1$, the canonical $\Zd$-graded $W(k)$-module
isomorphism
\begin{equation}\label{eq_witt_split}
    \tld{W}^1(k,\Id) = W(k) \oplus W(k) \simeq W(k)[\Zd]
\end{equation}
is a graded $W(k)$-algebra isomorphism. Let us write 
\begin{equation}\label{eq_delta_witt}
    \delta: \tld{W}^1(k,\Id)\to W(k)
\end{equation}
for the map $W(k)\oplus W(k)\to W(k)$ given by the sum 
of components. It is a $W(k)$-algebra morphism.

\subsection{Quaternion algebras}

We consider the case of a quaternion algebra $Q$ with its canonical involution $\gamma$,
and $\eps=-1$. Then for any invertible pure quaternions $z_1,z_2\in Q_0^\times$,
a direct computation of the form $T_{\gamma,z_1,z_2}$ in (\ref{eq_twisted_trace})
(see \cite[Prop 4.12]{G1}) shows that
we have in $\tld{W}^{-1}(Q,\gamma)$:
\begin{equation}\label{eq_prod_elem_quat}
    \fdiag{z_1}_\gamma \cdot \fdiag{z_2}_\gamma = \fdiag{-\Trd_Q(z_1z_2)}\phi_{z_1,z_2}\in W(k)
\end{equation}
where $\phi_{z_1,z_2}$ is the unique $2$-fold Pfister form whose Witt class 
is $\pfis{z_1^2,z_2^2}-n_Q\in W(k)$. If $\Trd_Q(z_1z_2)=0$ (which means that 
$z_1$ and $z_2$ anti-commute),
(\ref{eq_prod_elem_quat}) should be understood as saying that 
$\fdiag{z_1}_\gamma \cdot \fdiag{z_2}_\gamma=0\in W(k)$.

When $Q$ is not split, (\ref{eq_prod_elem_quat}) entirely characterizes the $W(k)$-algebra
structure of $\tld{W}^{-1}(Q,\gamma)$ since $W^{-1}(Q,\gamma)$ is additively 
generated by the $\fdiag{z}_\gamma$. When $Q$ is split, we may choose
$z_0\in Q\setminus \{0\}$ such that $z_0^2=0$. Then the left ideal $Qz_0$ is a 
$2$-dimensional $k$-vector
space, and if we define the anti-symmetric bilinear form 
\begin{equation}\label{eq_b_z0}
    \foncdef{b_{z_0}}{Qz_0\times Qz_0}{k}{(zz_0,z'z_0)}{-z_0\gamma(z)z',}
\end{equation}
or equivalently
\begin{equation}
    b(x,y)z_0 = \gamma(x)y
\end{equation}
for all $x,y\in Qz_0$, we get an anti-hermitian Morita equivalence
\begin{equation}
    (Q,\gamma) \stackrel{(Qz_0,b_{z_0})}{\rightsquigarrow} (k,\Id).
\end{equation}

This induces a $W(k)$-algebra morphism 
\begin{equation}\label{eq_def_phi}
    \Phi_{z_0}: \tld{W}^{-1}(Q,\gamma) \stackrel{(b_{z_0})_*}{\isom} \tld{W}^1(k,\Id)
    \isom W(k)[\Zd] \xrightarrow{\delta} W(k)
\end{equation}
using (\ref{eq_diagram_morita}), (\ref{eq_witt_split}) and (\ref{eq_delta_witt}). Note 
that the restriction of $\Phi_{z_0}$ to $W(k)$ is the identity.

\begin{lem}\label{lem_morita_split_quat}
    Let $z_0\in Q_0 \setminus \{0\}$ be such that $z_0^2=0$. Then for 
    any $z\in Q_0^\times$, $(b_{z_0})_*(\fdiag{z}_\gamma)$ is 
    isometric to the symmetric bilinear form 
    \begin{equation}
        \foncdef{b_{z_0,z}}{Qz_0\times Qz_0}{k}{(z_1z_0,z_2z_0)}{-\Trd_Q(z_0\gamma(z_1)zz_2).}
    \end{equation}
    If $z$ and $z'$ anti-commute, this form is a hyperbolic plan; otherwise,
    it is isometric to $\fdiag{-\Trd_Q(zz_0)}\pfis{z^2}$.
\end{lem}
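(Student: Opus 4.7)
The plan is to first compute the Morita transfer $(b_{z_0})_*(\fdiag{z}_\gamma)$ as an explicit symmetric bilinear form on $Qz_0$, and then diagonalise it. Applying the standard formula for transfer along the Morita equivalence $(Q,\gamma) \stackrel{(Qz_0, b_{z_0})}{\rightsquigarrow} (k, \Id)$ to the rank-one form $\fdiag{z}_\gamma$ on $Q$, and identifying $Q \otimes_Q Qz_0 \simeq Qz_0$, yields the bilinear form $(v,v') \mapsto b_{z_0}(v, zv')$ on $Qz_0$. I will unfold this using the defining relation $b_{z_0}(x,y)z_0 = \gamma(x)y$ together with the trace identity $z_0 q z_0 = \Trd_Q(qz_0)\, z_0$ valid for all $q\in Q$ (a short consequence of $z_0^2=0$, $\gamma(z_0)=-z_0$ and $\Trd_Q(z_0)=0$) to obtain $b_{z_0}(z_1 z_0,\, z z_2 z_0) = -\Trd_Q(z_0 \gamma(z_1) z z_2)$, which is exactly $b_{z_0,z}(z_1z_0, z_2z_0)$.

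The second step then splits according to whether $\{z_0, zz_0\}$ is a $k$-basis of $Qz_0$. The degenerate alternative $zz_0 \in kz_0$ is equivalent to $(z-\lambda)z_0 = 0$ for some $\lambda\in k$; taking reduced norms forces $\lambda^2 = z^2$, and using $\Trd_Q(zz_0) = zz_0 + z_0 z$ for pure $z, z_0$ one sees this happens exactly when $z$ and $z_0$ anti-commute (which I take to be the intended reading of the condition in the lemma statement, where $z'$ appears to be a typo for $z_0$). In the generic case, evaluating $b_{z_0,z}$ on $\{z_0, zz_0\}$ and using $z^2\in k$ together with $\Trd_Q(z_0)=0$ yields the Gram matrix $\begin{pmatrix} -\Trd_Q(zz_0) & 0 \\ 0 & z^2\,\Trd_Q(zz_0) \end{pmatrix}$, which factors as $\fdiag{-\Trd_Q(zz_0)}\pfis{z^2}$, as claimed.

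In the anti-commuting case, $b_{z_0,z}(z_0, z_0) = -\Trd_Q(zz_0) = 0$ exhibits $z_0$ as an isotropic vector; since $\fdiag{z}_\gamma$ is non-degenerate (as $z\in Q_0^\times$) and Morita transfer preserves non-degeneracy, $b_{z_0,z}$ is a $2$-dimensional non-degenerate symmetric bilinear form with an isotropic vector, hence a hyperbolic plane. I expect the main technical point to be the clean derivation of the Morita transfer formula, and in particular the identity for $z_0 q z_0$; once that is in hand, the case split and diagonalisation are routine linear algebra.
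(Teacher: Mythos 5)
Your proof is correct and follows essentially the same route as the paper's: the same Morita transfer computation (relying on the identity $z_0qz_0=\Trd_Q(qz_0)z_0$, which the paper derives identically from $\Trd_Q(z_0x)z_0=(z_0x-\gamma(x)z_0)z_0=z_0xz_0$), the same final formula $b_{z_0,z}(z_1z_0,z_2z_0)=-\Trd_Q(z_0\gamma(z_1)zz_2)$, and the same case split with the Gram matrix computed on $(z_0,zz_0)$. The only cosmetic differences are that the paper exhibits $zz_0$ rather than $z_0$ as the isotropic vector in the anti-commuting case, and that you spell out the equivalence between anti-commutativity and linear dependence of $\{z_0,zz_0\}$, which the paper leaves implicit; you are also right that ``$z'$'' in the lemma statement is a typo for ``$z_0$''.
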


\begin{proof}
    From the general theory of hermitian Morita equivalences, $(b_{z_0})_*$
    sends the anti-hermitian space $(Q,\fdiag{z}_\gamma)$ to 
    \[ \anonfoncdef{Q\otimes_Q Qz_0\times Q\otimes_Q Qz_0}{K}
    {(z_1\otimes z_0,z_2\otimes z_0)}{b_{z_0}(z_0,\fdiag{z}_\gamma(z_1,z_2)z_0).} \]
    Now 
    \begin{align*}
        b_{z_0}(z_0,\fdiag{z}_\gamma(z_1,z_2)z_0)z_0 &= -z_0(\gamma(z_1)zz_2)z_0
        &= -\Trd_Q(z_0\gamma(z_1)zz_2)z_0
    \end{align*}
    where the last equality is because for any $x\in Q$ we have 
    \[ \Trd_Q(z_0x)z_0 = (z_0x - \gamma(x)z_0)z_0 = z_0xz_0.  \]
    If $z$ and $z_0$ anti-commute, $zz_0\neq 0$ and $b_{z_0,z}(zz_0,zz_0)=0$,
    so $b_{z_0,z}$ is isotropic, and therefore a hyperbolic plan.

    If $z$ and $z_0$ do not anti-commute, we have $\Trd_Q(zz_0)\neq 0$,
    and $(z_0, zz_0)$ is an orthogonal $k$-basis of $Qz_0$ for $b_{z_0,z}$,
    which gives the diagonalization $b_{z_0,z}\simeq \fdiag{-\Trd_Q(zz_0),\Trd_Q(zz_0)z^2}$.
\end{proof}

\subsection{$\lambda$-operations}

In \cite{G2}, for any Azumaya algebra with involution of the first kind
$(A,\sigma)$ over $k$, and any $\eps\in \mu_2(k)$, a structure of  
pre-$\lambda$-ring (see \cite{Yau} for a reference about 
pre-$\lambda$-rings) is defined on $\tld{GW}^\eps(A,\sigma)$, whose 
restriction to $GW(K)$ is the usual $\lambda$-ring structure (studied for 
instance in \cite{McGar}).

It is compatible with Morita equivalences, meaning that the top
row of (\ref{eq_diagram_morita}) is an isomorphism of pre-$\lambda$-rings,
and it is compatible with scalar extensions, meaning the (\ref{eq_diagram_scalar_ext})
is a commutative diagram of pre-$\lambda$-rings.

Note that the pre-$\lambda$-ring structure is compatible with the $\Zd$-grading,
meaning that $\lambda^d(GW^\eps(A,\sigma))$ is included in $GW(k)$ when $d$ 
is even, and in $GW^\eps(A,\sigma)$ when $d$ is odd. Also note that by definition
of a pre-$\lambda$-ring, $\lambda^0$ is the constant function to $\fdiag{1}$,
and $\lambda^1$ is the identity.

It follows from \cite[Prop 5.2]{G2} that if $\sigma$ is symplectic, $a\in A^\times$ 
is $\eps$-symmetric and $n=\deg(A)$, then 
\begin{equation}\label{eq_det_nrd}
    \lambda^n(\fdiag{a}_\sigma) = \fdiag{\Nrd_A(a)}.
\end{equation}
The square class defined by this $1$-dimensional form is precisely
the determinant of $\fdiag{a}_\sigma$ (as defined in \cite{BOI}).

% For any $a\in k^\times$, any $h\in GW^\eps(A,\sigma)$, and any $d\in \N$, we have 
%     \begin{equation}
%         \lambda^d(\fdiag{a}h) = \fdiag{a^d}\lambda^d(h).
%     \end{equation}

\section{Generic splitting and residues}

A crucial method for us is the scalar extension to a generic splitting field of 
our quaternion algebra. The behaviour of anti-hermitian forms under such generic 
splitting has been the object of a fair amount of research, but we will mainly 
refer to \cite{QT}, which presents a good overview of the situation.

\subsection{The generic elementary form}

Let $Q$ be a quaternion algebra. We choose a quaternionic 
basis $(i,j,ij)$, with $i^2=a$ and $j^2=b$, such that $(ij)^2$ is not
a square in $k$. This is of course automatic when $Q$ is not split,
and even when $Q$ is split it is always possible unless $k$ is quadratically
closed (we exclude this case from the present discussion).

We define the generic pure quaternion of $Q$ as
\begin{equation}\label{eq_def_omega_gen}
    \tld{\omega} = xi + yj + zij \in Q_{k(x,y,z)}.
\end{equation}

To make use of the fact that $\tld{\omega}$ is generic, we 
use the setting of versal torsors as in \cite[Section 5]{GMS}. Let 
$h_0\in H_Q^{(1)}(k)$ and $G=O(h_0)$ be its orthogonal group.
There is a canonical isomorphism $h\mapsto \Iso(h,h_0)$ between
the functors $H_Q^{(1)}$ and $K\mapsto H^1(K,G)$ which allows us 
to view elementary forms as $G$-torsors. Then 
$\fdiag{\tld{\omega}}_{\gamma_{k(x,y,z)}}\in H_Q^{(1)}(k(x,y,z))$
is a torsor over the function field of $\mathbb{A}^3_k$, and it 
is the generic point of a torsor over $\mathbb{A}^3_k\setminus V(\tld{\omega}^2)$
(because the specialization of $\tld{\omega}$ at some point in $\mathbb{A}^3_k$
is non-invertible if and only if this point is in $V(\tld{\omega}^2)$).
Note that $\tld{\omega}^2\in k[x,y,z]$ is nothing but the pure norm form 
of $Q$ in the coordinate system given by the basis $(i,j,ij)$ of $Q_0$.

\begin{lem}\label{lem_versal}
    The $G$-torsor over $k(x,y,z)$ corresponding to  
    $\fdiag{\tld{\omega}}_{\gamma_{k(x,y,z)}}$ is a versal $G$-torsor.
\end{lem}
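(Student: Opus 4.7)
I plan to exhibit the $G$-torsor $T := \Iso(\fdiag{\tld\omega}_\gamma, h_0)$ over $k(x,y,z)$ as the generic fiber of a classifying family over $X := Q_0^\times$, viewed as the open subvariety of $\mathbb{A}^3_k = \Spec k[x,y,z]$ where $\tld\omega^2$ does not vanish, and then invoke the versal-torsor criterion of \cite[\S 5]{GMS}.

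First I would construct the family $\mathcal T \to X$ whose fiber at a point $z \in X$ is the scheme $\Iso(\fdiag{z}_\gamma, h_0)$. This is étale-locally trivial on $X$, since any two nondegenerate $1$-dimensional anti-hermitian forms over $(Q,\gamma)$ become isometric after a suitable étale extension; hence $\mathcal T$ is a $G$-torsor over $X$. By construction its generic fiber at $\eta = \Spec k(x,y,z)$ is the torsor $T$ corresponding to $\fdiag{\tld\omega}_\gamma$ under the canonical bijection $H^1(-,G) \simeq H_Q^{(1)}$ recalled in the introduction.

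Next I would verify the classifying property needed for versality: for every extension $L/k$ of large enough cardinality, the specialization map $X(L) \to H^1(L,G)$, $z \mapsto [\mathcal T_z]$, is surjective. This is immediate, since every class in $H^1(L,G) \simeq H_Q^{(1)}(L)$ is represented by some $\fdiag{z}_\gamma$ with $z \in Q_0^\times(L) = X(L)$, by the standard diagonalization of $1$-dimensional anti-hermitian forms. Because $X$ is smooth and rational with function field $k(x,y,z)$ and $\eta$ corresponds to $\tld\omega$, \cite[\S 5]{GMS} then yields that $T = \mathcal T_\eta$ is versal.

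The main (mild) technical point is ensuring that $\mathcal T$ is a genuine $G$-torsor on $X$, i.e., the étale-local triviality above. A conceptually cleaner route is to present $\mathcal T$ via the generically free linear $G$-representation $V = Q$ with $G$ acting by left multiplication: $G$ acts freely on $V^\circ = Q^\times$, and the orbit map $u \mapsto \gamma(u) z_0 u$ identifies $V^\circ/G$ with a dense open of $X$ whose generic torsor is precisely $T$. Either presentation reduces the assertion to the standard versality of the generic torsor of \cite[\S 5]{GMS}.
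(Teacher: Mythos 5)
The proof has a genuine gap in the middle paragraph. You establish that the specialization map $X(L)\to H^1(L,G)$ is \emph{surjective} (every class in $H_Q^{(1)}(L)$ is diagonalized as $\fdiag{z}_\gamma$ for some $z\in Q_0^\times(L)$), but surjectivity is only the \emph{classifying} property of the torsor $\mathcal T\to X$, not versality. Versality in the sense of \cite[\S 5]{GMS} requires that the restriction of $\mathcal T$ to \emph{every} dense open $U\subset X$ remain classifying; equivalently, for every infinite $L/k$ and every $h\in H_Q^{(1)}(L)$, the set of $L$-points $z\in X(L)$ with $\fdiag{z}_\gamma\simeq h$ must be \emph{dense} in $X_L$, not merely nonempty. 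Your appeal to ``$X$ is smooth and rational with function field $k(x,y,z)$'' does not supply this: smoothness and rationality are not part of the versality criterion, and surjectivity alone does not upgrade to density. The paper's proof, by contrast, directly observes that this set of points is dense (because the fiber of the isometry class is the image of the dominant orbit map $u\mapsto \gamma(u)zu$ from $Q_K^\times$ to $(Q_0^\times)_K$, which contains a nonempty open, hence has dense $K$-points for $K$ infinite) and then quotes the density criterion.

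Your final paragraph, presenting $\mathcal T$ via the generically free linear representation $V=Q$ with $G$ acting by left multiplication and the orbit map $u\mapsto\gamma(u)z_0u$, is in fact the standard route in \cite[\S 5]{GMS} that would close the gap: generic torsors arising from generically free linear representations are versal, precisely because the density of fibers is automatic in that setting. But as written you offer it only as a ``conceptually cleaner'' aside rather than as the argument, and the body of the proof rests on the insufficient surjectivity claim. To fix the proof, either replace surjectivity by density (arguing dominance of the orbit map, as the paper implicitly does), or promote the last paragraph to the main argument and invoke the versality of the generic torsor of a generically free representation.
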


\begin{proof}
    Let $K/k$ be a field extension with $K$ infinite, and let 
    $h\in H_Q^{(1)}(K)$. Clearly the points $(s,t,r)\in \mathbb{A}^3(K)$
    such that $h\simeq \fdiag{si+tj+rij}_{\gamma_K}$ are dense,
    so for any open $U\subset \mathbb{A}^3_k$ there is a point 
    in $U(K)$ such that $h$ is the corresponding specialization 
    of $\fdiag{\tld{\omega}}_{\gamma_{k(x,y,z)}}$ seen as a torsor on 
    $\mathbb{A}^3\setminus V(\tld{\omega}^2)$.
\end{proof}

We also define 
\begin{equation}\label{eq_def_omega}
    \omega = xi+yj+ij \in Q_{k(x,y)}
\end{equation}
and 
\begin{equation}\label{eq_def_delta}
    \Delta = -\omega^2 = -ax^2-by^2+ab =  \in k[x,y].
\end{equation}

\subsection{The Severi-Brauer variety}

Let $SB(Q)$ be the Severi-Brauer variety of $Q$. By definition, 
if $K/k$ is an extension, $SB(Q)(K)$ is the set of left ideals 
of reduced dimension $1$ (equivalently, of $K$-dimension $2$) of $Q_K$.
If $I\subset Q_K$ is such an ideal, then $I=Qz_0$ for some 
non-zero pure quaternion $z_0\in Q_K$ with $z_0^2=0$, and $z_0$ is 
unique up to a constant. Thus all such $z_0$ lie on a line $L(I)$
which is recovered intrinsically as $L(I)=I\cap \gamma_K(I)$.

If $X_Q$ is the projective conic defined by the pure norm form of 
$Q$, $X_Q(K)$ is the set of lines in $(Q_K)_0$ consisting of pure 
quaternion whose square is $0$, and there is a canonical 
isomorphism $SB(Q)\simeq X_Q$ sending $I\in SB(Q)(K)$ to 
$L(I)\in X_Q(K)$.

Let $F_\infty$ be the quadratic extension $k(ij)\subset Q$ 
of $k$. Let $V=ki\oplus kj\subset Q_0$; we define $\mu: V\otimes_k F_\infty\to V$
as the multiplication map inside $Q$, and $L_\infty = \ker(\mu)\subset Q_0\otimes_k F_\infty$.
Then one may chech that $L\infty$ is a point in $X_Q(F_\infty)$, which 
defines a closed point $\infty\in X_Q$ of degree $2$ and residue field 
$F_\infty$.

Then if 
\begin{equation}\label{eq_y}
    Y = V(\Delta) \subset \mathbb{A}^2_k
\end{equation}
is the affine conic defined by $\Delta$, there is a natural identification 
$Y\simeq X_Q\setminus \{\infty\}$, and therefore
\begin{equation}\label{eq_f}
    F = \operatorname{Frac}(k[x,y]/(\Delta)) = k(Y)
\end{equation}
is a function field of $X_Q$, and thus a generic splitting 
field of $Q$.

The image of $\omega\in Q\otimes_k k[x,y]$ in $Q_F$ is written 
$\bar{\omega}$. By definition, of $F$ $\bar{\omega}^2=0$ (which is 
a witness to the fact that $F$ is a splitting field of $Q$).

\subsection{Valuations and residues}

Let $K/k$ be a field extension
and $v: K^\times\to \Z$ a valuation on $K$ which is trivial on $k$, with valuation ring 
$\mathcal{O}_v$, uniformizing element $\pi$ and residue field $\kappa_v$.
Recall that there are 
residue maps $\partial^1_v: W(K)\to W(\kappa_v)$ (independent of $\pi$) and 
$\partial^2_{v,\pi}: W(K)\to W(\kappa_v)$
(which depends on the choice of $\pi$, but its kernel doesn't). They actually form 
a $W(k)$-algebra morphism $\partial_{v,\pi}: W(K) \to W(\kappa_v)[\Zd]$
(where the even component is $\partial_v^1$ and the odd component is 
$\partial_{v,\pi}^2$).
In practice, if $q\in W(K)$, we can write 
$q = \fdiag{a_1,\dots,a_n}+\fdiag{\pi}\fdiag{b_1,\dots,b_m}$
with $a_i,b_i\in \mathcal{O}_v$ for all $i$ and $j$, and then 
$\partial^1(q)=\fdiag{\bar{a_1},\dots,\bar{a_n}}$
and $\partial_{2,\pi}(q) = \fdiag{\bar{b_1},\dots,\bar{b_m}}$.
\\

Every closed point $p\in Y^{(1)}$ defines a discrete rank $1$ valuation $v_p$
on $F$, with residue field $F_p$.  We write
\begin{equation}
    W_0(F) = \bigcap_{p\in Y^{(1)}} \ker(\partial^2_{v_p,\pi_p}: W(F)\to W(F_p))
\end{equation}
which does not depend on the choice of uniformizers $\pi_p$ for each $p\in Y^{(1)}$.
It is a sub-$W(k)$-algebra of $W(F)$.

There is also the valuation "at infinity" $v_\infty$
corresponding to the point $\infty\in X_Q^{(1)}$, with residue field $F_\infty$. 
It is characterized by 
the fact that if $\bar{u}\in k[Y] = k[x,y]/(\Delta)$ is the class of $u=k[x,y]$,
then $v_\infty(\bar{u})=-\deg(u)$. We will shorten $\partial^1_{v_\infty}$ 
and $\partial^2_{v_\infty,\pi_\infty}$ as $\partial^1_\infty$ and $\partial^2_\infty$,
where $\pi_\infty$ is any choice of uniformizer (which will not matter to us).

\subsection{Generic splitting of hermitian forms}

Since $\bar{\omega}^2=0$ in $Q_F$, we get a $W(k)$-algebra morphism 
\begin{equation}
    \Phi_{\bar{\omega}}: \tld{W}^{-1}(Q_F,\gamma_F)\to W(F)
\end{equation}
(see (\ref{eq_def_phi})), and its composition with the scalar extension map 
yields a $W(k)$-algebra morphism
\begin{equation}
    \Psi_{\bar{\omega}}: \tld{W}^{-1}(Q,\gamma)\to \tld{W}^{-1}(Q_F,\gamma_F) 
    \xrightarrow{\Phi_{\bar{\omega}}} W(F).
\end{equation}

By definition, the restriction of $\Psi_{\bar{\omega}}$ to $W(k)$ is the 
scalar extension map $W(k)\to W(F)$, and its restriction to $W^{-1}(Q,\gamma)$
is the composition of the scalar extension map to $F$ with the isomorphism
$(b_{\bar{\omega}})_*$ (see (\ref{eq_b_z0})).

The exact sequences in \cite[Thm 5.1, Thm. 5.2]{QT} have the following exact 
sequences as direct consequences:
\begin{equation}\label{eq_sequence_even}
    0 \to n_QW(k) \to W(k) \xrightarrow{\Psi_{\bar{\omega}}} W_0(F) 
    \xrightarrow{\partial^2_\infty} W(F_\infty) 
\end{equation}
\begin{equation}\label{eq_sequence_odd}
    0 \to W^{-1}(Q,\gamma) \xrightarrow{\Psi_{\bar{\omega}}} W_0(F)
    \xrightarrow{\partial^1_\infty} W(F_\infty) 
    \xrightarrow{s_*} W(k)
\end{equation}
where $s: F_\infty\to k$ is any $k$-linear form which is $0$ on $k$.
We collect some immediate observations on these sequences:

\begin{prop}\label{prop_ker_psi}
    We have $W(k)\cap \Ker(\Psi_{\bar{\omega}}) = n_QW(k)$
    and $W^{-1}(Q,\gamma)\cap \Ker(\Psi_{\bar{\omega}}) = 0$.
    The scalar extension map $\tld{W}^{-1}(Q,\gamma)\to 
    \tld{W}^{-1}(Q_F,\gamma_F)$ has kernel $n_QW(k)$.
\end{prop}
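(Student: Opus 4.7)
The plan is to read all three assertions off the two exact sequences (\ref{eq_sequence_even}) and (\ref{eq_sequence_odd}), once one unpacks the $\Zd$-grading of the scalar extension map.

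First I would observe that the first two statements are literally the initial exactness of the two sequences: exactness of (\ref{eq_sequence_even}) at $W(k)$ is the statement $W(k) \cap \Ker(\Psi_{\bar\omega}) = n_Q W(k)$, and injectivity at the start of (\ref{eq_sequence_odd}) is the statement $W^{-1}(Q,\gamma) \cap \Ker(\Psi_{\bar\omega}) = 0$. These two points require no further argument.

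For the third statement, I would use that the scalar extension $\tld W^{-1}(Q,\gamma) \to \tld W^{-1}(Q_F,\gamma_F)$ respects the $\Zd$-grading, so its kernel splits as a direct sum of an even part inside $W(k)$ and an odd part inside $W^{-1}(Q,\gamma)$. On the even component, the scalar extension is the usual map $W(k) \to W(F)$, and by the factorization (\ref{eq_def_phi}) it coincides with $\Psi_{\bar\omega}|_{W(k)}$, since $\Phi_{\bar\omega}$ is the identity on the degree-zero summand $W(F)$; hence the even part of the kernel is $n_Q W(k)$ by the first statement. On the odd component, $\Psi_{\bar\omega}|_{W^{-1}(Q,\gamma)}$ factors as the scalar extension to $W^{-1}(Q_F,\gamma_F)$ followed by the Morita isomorphism $(b_{\bar\omega})_*$; since the latter is injective, the kernel of the scalar extension on this summand equals $\Ker(\Psi_{\bar\omega}|_{W^{-1}(Q,\gamma)})$, which is $0$ by the second statement. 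Adding the two contributions gives a total kernel $n_Q W(k)$, as required.

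There is no genuine obstacle in this argument, since all the deep content is packaged into the two sequences imported from \cite{QT}; the only care needed is to check on each graded piece that the scalar extension really is recognized as a factor of $\Psi_{\bar\omega}$, which is immediate from the definitions in (\ref{eq_def_phi}).
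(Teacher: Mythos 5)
Your proposal is correct and takes the same route the paper intends: the paper labels these as "immediate observations" on the exact sequences (\ref{eq_sequence_even}) and (\ref{eq_sequence_odd}), and you supply exactly the expected reading — the first two statements from exactness/injectivity at the start of each sequence, and the third by decomposing the scalar extension map by $\Zd$-grading and noting that on each graded piece $\Phi_{\bar\omega}$ is injective, so the graded kernels of the scalar extension and of $\Psi_{\bar\omega}$ agree.
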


\begin{coro}\label{cor_nq_trivial}
    We have $n_Q W^{-1}(Q,\gamma) = 0$, and $n_Q \tld{W}^{-1}(Q,\gamma) = n_Q W(k)$.
\end{coro}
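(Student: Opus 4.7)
The plan is to derive both statements from Proposition \ref{prop_ker_psi}, exploiting the fact that $F$ is a splitting field of $Q$, which forces $n_Q$ to vanish in $W(F)$.

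First I would observe that the second assertion is a formal consequence of the first: by the $\Zd$-grading,
\[ n_Q \tld{W}^{-1}(Q,\gamma) = n_Q \bigl(W(k) \oplus W^{-1}(Q,\gamma)\bigr) = n_Q W(k) \oplus n_Q W^{-1}(Q,\gamma), \]
so once we know $n_Q W^{-1}(Q,\gamma) = 0$ we are done.

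So the content is to show that $n_Q \cdot x = 0$ for every $x \in W^{-1}(Q,\gamma)$. Since $n_Q$ lies in the even component $W(k)$ and $x$ lies in the odd component, the product $n_Q x$ lies in $W^{-1}(Q,\gamma)$. The key step is to evaluate $\Psi_{\bar{\omega}}(n_Q x)$: because $\Psi_{\bar{\omega}}$ is a $W(k)$-algebra morphism whose restriction to $W(k)$ is the scalar extension map to $W(F)$, we have $\Psi_{\bar{\omega}}(n_Q x) = (n_Q)_F \cdot \Psi_{\bar{\omega}}(x)$ in $W(F)$. Since $F$ is a splitting field of $Q$, the algebra $Q_F$ is a matrix algebra, so $(n_Q)_F$ is hyperbolic and hence $0$ in $W(F)$. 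Therefore $\Psi_{\bar{\omega}}(n_Q x) = 0$. By the second part of Proposition \ref{prop_ker_psi}, namely $W^{-1}(Q,\gamma) \cap \Ker(\Psi_{\bar{\omega}}) = 0$, we conclude $n_Q x = 0$.

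There is no real obstacle here — the whole point of stating Proposition \ref{prop_ker_psi} with its injectivity on the odd component was precisely to make this corollary a one-line check. An equivalent route would use the third assertion of the proposition (kernel of the scalar extension $\tld{W}^{-1}(Q,\gamma) \to \tld{W}^{-1}(Q_F,\gamma_F)$ equals $n_Q W(k)$): the element $n_Q x$ dies after scalar extension to $F$, so it lies in $n_Q W(k) \subset W(k)$, but being of odd degree it must be $0$. Either phrasing yields the result cleanly.
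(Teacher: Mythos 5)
Your proof is correct and follows essentially the same route as the paper: evaluate under $\Psi_{\bar{\omega}}$, use $n_{Q_F}=0$ since $F$ splits $Q$, and conclude by the injectivity of $\Psi_{\bar{\omega}}$ on $W^{-1}(Q,\gamma)$ from Proposition \ref{prop_ker_psi}. The only (minor) difference is that you spell out the reduction of the second assertion to the first via the $\Zd$-grading, which the paper leaves implicit.
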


\begin{proof}
    Since $\Psi_{\bar{\omega}}(n_Q)=n_{Q_F}=0$, $\Psi_{\bar{\omega}}(n_Q W^{-1}(Q,\gamma))=0$, 
    so $n_Q W^{-1}(Q,\gamma)$ since $\Psi_{\bar{\omega}}$ is injective on 
    $W^{-1}(Q,\gamma)$.
\end{proof}

\begin{rem}\label{rem_nq_module}
    In particular, this means that the $W(k)$-module structure of $W^{-1}(Q,\gamma)$
    factors through a $W(k)/(n_Q)$-module structure.
\end{rem}

A more precise description of the image and kernel of $\Psi_{\bar{\omega}}$ is:

\begin{prop}\label{prop_exact_gen_splitting}
    The scalar extension map $\tld{W}^{-1}(Q,\gamma)\to 
    \tld{W}^{-1}(Q_F,\gamma_F)$ has kernel $n_QW(k)$, and 
    there is an exact sequence 
    \[ 0 \to (\fdiag{2}\pfis{(ij)^2}-\fdiag{ij}_\gamma)\tld{W}^{-1}(Q,\gamma) 
     \to \tld{W}^{-1}(Q,\gamma) 
    \xrightarrow{\Psi_{\bar{\omega}}} W_0(F) \to 0. \]
\end{prop}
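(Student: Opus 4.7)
\emph{Plan.} The claim that the scalar extension $\tld{W}^{-1}(Q,\gamma) \to \tld{W}^{-1}(Q_F,\gamma_F)$ has kernel $n_QW(k)$ is already contained in Proposition~\ref{prop_ker_psi}, so all the work lies in the exact sequence. Set $\alpha = \fdiag{2}\pfis{(ij)^2} - \fdiag{ij}_\gamma$ and $I = \alpha\tld{W}^{-1}(Q,\gamma)$. The strategy has three steps: the easy inclusion $I \subseteq \Ker\Psi_{\bar\omega}$ via a direct computation of $\Psi_{\bar\omega}(\alpha)$; the surjectivity of $\Psi_{\bar\omega}$ onto $W_0(F)$; and the reverse inclusion $\Ker\Psi_{\bar\omega} \subseteq I$.

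For the first step, I would apply Lemma~\ref{lem_morita_split_quat} with $z_0 = \bar\omega$ and $z = ij$. Since $\Trd_Q(ij\cdot\bar\omega) = -2ab \neq 0$, these two elements do not anti-commute, and the lemma gives $\Psi_{\bar\omega}(\fdiag{ij}_\gamma) = \fdiag{2ab}\pfis{(ij)^2}$. Using $(ij)^2 = -ab$ a short computation in $W(F)$ shows $\fdiag{2ab}\pfis{(ij)^2} = \fdiag{2}\pfis{(ij)^2} = \Psi_{\bar\omega}(\fdiag{2}\pfis{(ij)^2})$ (the ratio $2ab/2 = ab$ is the norm of $\sqrt{-ab}$ from $F_\infty$, so the two one-dimensional scalings of $\pfis{(ij)^2}$ agree). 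Hence $\Psi_{\bar\omega}(\alpha) = 0$, and $I \subseteq \Ker\Psi_{\bar\omega}$.

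For surjectivity, the inclusion $\Ima\Psi_{\bar\omega} \subseteq W_0(F)$ is immediate from (\ref{eq_sequence_even}) and (\ref{eq_sequence_odd}). Given $q \in W_0(F)$, the exactness at $W(F_\infty)$ in (\ref{eq_sequence_odd}) gives $\partial^1_\infty(q) \in \Ker(s_*)$; by the classical Scharlau transfer exact sequence for the quadratic extension $F_\infty = k(\sqrt{(ij)^2})/k$, this kernel coincides with the image of the scalar extension $W(k) \to W(F_\infty)$. Choose $q' \in W(k)$ with $q'\otimes F_\infty = \partial^1_\infty(q)$; then $q - q'\otimes F$ lies in $W_0(F) \cap \Ker(\partial^1_\infty)$, which by (\ref{eq_sequence_odd}) is $\Psi_{\bar\omega}(W^{-1}(Q,\gamma))$. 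Hence $q \in \Ima\Psi_{\bar\omega}$.

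For the reverse inclusion, take $\beta = q + h \in \Ker\Psi_{\bar\omega}$ with $q \in W(k)$, $h \in W^{-1}(Q,\gamma)$. Applying $\partial^1_\infty$ and using that $\Psi_{\bar\omega}(h) \in \Ker(\partial^1_\infty)$ by (\ref{eq_sequence_odd}), we obtain $q\otimes F_\infty = \partial^1_\infty(\Psi_{\bar\omega}(q)) = 0$; the classical Pfister kernel description $\Ker(W(k) \to W(F_\infty)) = \pfis{(ij)^2}W(k)$ then yields $q = \pfis{(ij)^2}q'$ for some $q' \in W(k)$. By construction the even component of $\fdiag{1/2}q'\alpha$ equals $\pfis{(ij)^2}q' = q$, so $\beta - \fdiag{1/2}q'\alpha$ lies in $W^{-1}(Q,\gamma)$ and remains in $\Ker\Psi_{\bar\omega}$ since $\alpha$ does. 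Injectivity of $\Psi_{\bar\omega}|_{W^{-1}(Q,\gamma)}$ (Proposition~\ref{prop_ker_psi}) forces this element to vanish, giving $\beta = \fdiag{1/2}q'\alpha \in I$. The crucial technical input throughout is the Scharlau transfer sequence for the quadratic extension $F_\infty/k$, which powers both the surjectivity argument and the factoring step in the converse.
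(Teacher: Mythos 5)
Your proof is correct and follows essentially the same route as the paper: the surjectivity argument via the Scharlau transfer exact sequence for $F_\infty/k$, the computation $\Psi_{\bar\omega}(\fdiag{ij}_\gamma)=\fdiag{2}\pfis{(ij)^2}$ via Lemma~\ref{lem_morita_split_quat}, and the identification of the kernel using the Pfister-form kernel description for $W(k)\to W(F_\infty)$ and injectivity of $\Psi_{\bar\omega}$ on the odd part. The only cosmetic difference is that you reorder the first two steps and justify $\fdiag{2ab}\pfis{(ij)^2}=\fdiag{2}\pfis{(ij)^2}$ via the norm-form interpretation of $ab$, whereas the paper notes directly that $-(ij)^2$ is represented by $\pfis{(ij)^2}$; these are the same observation.
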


\begin{proof}
    From the exact sequences (\ref{eq_sequence_even}) and  
    (\ref{eq_sequence_odd}), we see that 
    the kernel of $W(k)\to W(F)$ is $n_QW(k)$, that $W^{-1}(Q,\gamma)
    \to W^{-1}(Q_F,\gamma_F)$ is injective, and that $\Psi_{\bar{\omega}}$
    has image in $W_0(F)\subset W(F)$.

    Let us show that $W_0(F)$ is included in the image of $\Psi_{\bar{\omega}}$.
    Let $q\in W_0(F)$, and write $q_\infty=\partial^1_\infty(q)\in W(F_\infty)$. 
    From (\ref{eq_sequence_odd}), we get that $s_*(q_\infty)=0\in W(k)$.
    But since $F_\infty$ is quadratic extension of $k$, the scalar extension map
    and $s_*$ fit in an exact sequence 
    \[  W(k) \xrightarrow{\rho} W(F_\infty) \xrightarrow{s_*} W(k) \]
    by \cite[Thm 34.4]{EKM}. Thus $q_\infty=\rho(q_0)$ for some 
    $q_0\in W(k)$. Now let $q_1=q-\Psi_{\bar{\omega}}(q_0)\in W_0(F)$. Then
    \[ \partial^1_\infty(q_1) = \partial^1_\infty(q) - \rho(q_0) = 0  \]
    where we used that the composition 
    \[ W(k)\xrightarrow{\Psi_{\bar{\omega}}} W(F) \xrightarrow{\partial^1_\infty} W(F_\infty)  \]
    is nothing but $\rho$ by definition of $\partial^1_\infty$.  
    Using (\ref{eq_sequence_odd}), we see that $q_1=\Psi_\omega(h_1)$ for some 
    $h_1\in W^{-1}(Q,\gamma)$. In the end $q = \Psi_{\bar{\omega}}(q_0+h_1)$.

    Then we prove that $\pfis{(ij)^2}-\fdiag{ij}_\gamma\in \Ker(\Psi_{\bar{\omega}})$.
    By Lemma \ref{lem_morita_split_quat}, we have 
    \begin{align*}
        \Psi_{\bar{\omega}}(\fdiag{ij}_\gamma) &= (b_{\bar{\omega}})_*(\fdiag{ij}_{\gamma_F}) \\
        &\simeq \fdiag{-\Trd_{Q_F}(ij\cdot \bar{\omega})}\pfis{(ij)^2} \\
        &= \fdiag{-2(ij)^2}\pfis{(ij)^2} \\
        &= \fdiag{2}\pfis{(ij)^2}.
    \end{align*}

    Finally, let $q-h\in \Ker(\Psi_{\bar{\omega}})$. Then $q_F = \Psi_{\bar{\omega}}(h)$
    so by (\ref{eq_sequence_odd}) we have $\partial^1_\infty(q_F)=0$, so $q_{F_\infty}=0$.
    By \cite[Thm 34.7]{EKM}, $q = \pfis{(ij)^2}q'$ for some $q'\in W(k)$. Then 
    $\Psi_{\bar{\omega}}(h) = \Psi_{\bar{\omega}}(\fdiag{2}q'\fdiag{ij}_\gamma)$,
    so by injectivity of $\Psi_{\bar{\omega}}$ on $W^{-1}(Q,\gamma)$, 
    $h= \fdiag{2}q'\fdiag{ij}_\gamma$ and $q-h = 
    \fdiag{2}q'(\fdiag{2}\pfis{(ij)^2}-\fdiag{ij}_\gamma)$.
\end{proof}

\section{Modules of invariants}

In this section we define various modules of invariants, and prove some general statements 
that relate them, independently of any choice of generators.

\subsection{Definition of the modules}

We promote $\tld{W}^{-1}(Q,\gamma)$ to a functor $\mathbf{Field}_{/k}\to \mathbf{Ab}$ by 
setting 
\begin{equation}
    \underline{W^{-1}(Q,\gamma)}: K/k \mapsto W^{-1}(Q_K,\gamma_K)
\end{equation}
with the obvious scalar extension morphisms. 

Let us then define:
\begin{align}
    I_Q^{(r)} &= \Inv\left(H_Q^{(r)}, \underline{\tld{W}^{-1}(Q,\gamma)}\right) \\
    \bar{I}_Q^{(r)} &= \Inv\left(H_Q^{(r)}, \underline{\tld{W}^{-1}(Q,\gamma)}/(n_Q)\right) \\
    J_Q^{(r)} &= \Inv\left(\left(H_Q^{(1)}\right)^r, \underline{\tld{W}^{-1}(Q,\gamma)}\right) \\
    \bar{J}_Q^{(r)} &= \Inv\left(\left(H_Q^{(1)}\right)^r, \underline{\tld{W}^{-1}(Q,\gamma)}/(n_Q)\right).
\end{align}

These are all $\tld{W}^{-1}(Q,\gamma)$-modules. Since the functors where those invariants 
take values are $\Zd$-graded, this induces a $\Zd$-grading on those modules, and we will 
write 
\begin{equation}
    I_Q^{(r)} = \mbox{}^0 I_Q^{(r)} \oplus \mbox{}^1 I_Q^{(r)}
\end{equation}
for the corresponding decomposition into even and odd component, and likewise 
for the other modules. In the end, the module we are truly interested in is 
$\mbox{}^0 I_Q^{(r)} = \Inv(H_Q^{(r)}, W)$, but it is necessary to study the
full $I_Q^{(r)}$, which is reduced to the study of $\bar{I}_Q^{(r)}$, and in 
turn of $\bar{J}_Q^{(r)}$, which is determined by induction from $\bar{I}_Q^{(1)}$.
Note that by definition $I_Q^{(1)}= I_Q^{(1)}$ and $\bar{I}_Q^{(1)} = \bar{I}_Q^{(1)}$.

\begin{rem}\label{rem_odd_component}
    By Corollary \ref{cor_nq_trivial}, we have $\mbox{}^1 I_Q^{(r)} = \mbox{}^1 \bar{I}_Q^{(r)}$
    and $\mbox{}^1 J_Q^{(r)} = \mbox{}^1 \bar{J}_Q^{(r)}$.
\end{rem}

There is an obvious surjective natural transformation
\begin{equation}
    \anonfoncdef{\left(H_Q^{(1)}\right)^r}{H_Q^{(r)}}{(h_1,\dots,h_r)}{h_1\perp \dots \perp h_r}
\end{equation}
and an exact sequence 
\begin{equation}
    0 \to n_QW \to \underline{\tld{W}^{-1}(Q,\gamma)} \to 
    \underline{\tld{W}^{-1}(Q,\gamma)}/(n_Q) \to 0 
\end{equation}
which together induce a commutative diagram with exact lines and injective
vertical arrows:
\begin{equation}\label{eq_sequence_I_J}
    \begin{tikzcd}
        0 \rar & \Inv(H_Q^{(r)}, n_QW) \rar \dar & I_Q^{(r)} \rar \dar & \bar{I}_Q^{(r)} \dar \\
        0 \rar & \Inv\left(\left(H_Q^{(1)}\right)^r, n_QW\right) \rar & J_Q^{(r)} \rar   
        & \bar{J}_Q^{(r)} 
    \end{tikzcd}
\end{equation}

\subsection{Invariants in $n_QW$}

\begin{prop}\label{prop_constant_1}
    Every invariant in $\Inv(H_Q^{(1)}, n_QW)$ is constant.
\end{prop}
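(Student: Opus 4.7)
The plan is to reduce to evaluating $\alpha$ on the versal torsor and then perform a residue analysis. By Lemma \ref{lem_versal}, the torsor $\fdiag{\tld\omega}_\gamma \in H_Q^{(1)}(L)$ over $L := k(x,y,z)$ is versal, so $\alpha$ is determined by $v := \alpha_L(\fdiag{\tld\omega}_\gamma) \in n_QW(L)$. If I can show that $v = c_L$ for some $c \in n_QW(k)$, then for any $K/k$ and any $h \in H_Q^{(1)}(K)$, versality gives that $h$ is a specialization of $\fdiag{\tld\omega}_\gamma$ at a $K$-point of $U := \mathbb{A}^3_k \setminus V(\tld\omega^2)$, and $\alpha_K(h)$ is the corresponding specialization of $v = c_L$, namely $c_K$. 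Hence $\alpha$ is the constant invariant $c$, and the task reduces to showing that $v$ lies in $n_QW(k) \subset W(L)$.

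To achieve this, I would first show that $v$ is unramified at every divisorial valuation $\mu$ of $L$ trivial on $k$, then invoke purity to conclude $v \in W(k)$. For valuations associated to hypersurfaces $V(f) \subset \mathbb{A}^3_k$ with $f$ not proportional to $\tld\omega^2$, the element $\tld\omega$ remains invertible in $\mathcal{O}_\mu$, so the versal torsor extends to $\Spec\mathcal{O}_\mu$; by the standard compatibility of Witt invariants with specialization (as in \cite{GMS}), this yields $\partial^2_\mu(v) = 0$. For the remaining valuation attached to $V(\tld\omega^2)$, the residue field $\kappa_\mu$ is the function field of the affine cone over the Severi--Brauer conic of $Q$, and hence contains $F = k(Y)$. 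Writing $v = n_Q\cdot w$ for some $w \in W(L)$ and using that $\partial_\mu\colon W(L)\to W(\kappa_\mu)[\Zd]$ is a graded ring morphism with $\partial_\mu(n_Q)$ concentrated in degree zero, I would compute $\partial^2_\mu(v) = n_Q\cdot \partial^2_\mu(w)$, which vanishes because $n_Q = 0$ in $W(F) \subset W(\kappa_\mu)$.

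Once $v$ is seen to be unramified at every codimension one point of $\mathbb{A}^3_k$, purity for Witt groups of smooth affine varieties (equivalently, Milnor's exact sequence applied inductively in the three variables) gives $v \in W(k)$, say $v = c_L$ for some $c \in W(k)$. To upgrade $c$ from $W(k)$ to $n_QW(k)$, I would scalar-extend to the generic splitting field $F_L := L(Y_L)$: since $v = n_Q\cdot w$ and $(n_Q)_{F_L} = 0$, we get $c_{F_L} = 0$; injectivity of $W(F)\to W(F_L)$ (as $F_L/F$ is purely transcendental, by Springer) then yields $c_F = 0$, and finally Proposition \ref{prop_ker_psi} gives $c \in n_QW(k)$.

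The main obstacle I anticipate is the unramification at $V(\tld\omega^2)$: this is where the precise geometry enters (the exceptional locus is the very conic whose function field splits $Q$) and where the multiplicativity of the residue map must be combined with the factorization $v = n_Q\cdot w$. The other ingredients (versality, purity, and Proposition \ref{prop_ker_psi}) are then applied in a standard way.
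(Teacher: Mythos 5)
Your proposal is correct, and the overall scaffolding (evaluate on the versal torsor $\fdiag{\tld\omega}_\gamma$, show the resulting class in $n_QW(k(x,y,z))$ is unramified along all hypersurfaces of $\mathbb{A}^3_k$, then invoke purity and \cite[Cor 27.13]{GMS}) coincides with the paper's. Where you diverge is in the key step: handling the ramification at $V(\tld\omega^2)$. The paper works entirely in $W(K)$, writing $q = q_0 + \fdiag{\tld\omega^2}q_1$ with $q_i$ over the valuation ring and then using that $-\tld\omega^2$ is represented by $n_{Q_K}$ (it is the reduced norm of $\tld\omega$), so $\fdiag{\tld\omega^2}n_{Q_K} = n_{Q_K}$ and hence $n_{Q_K}q$ lies in $W(\mathcal{O})$. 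You instead push to the residue field: since $\partial_\mu$ is a graded ring morphism and $n_Q$ is defined over $k\subset\mathcal{O}_\mu$, you get $\partial^2_\mu(n_{Q_K}w) = n_{Q_{\kappa_\mu}}\cdot\partial^2_\mu(w)$, and this vanishes because $\kappa_\mu$ is the function field of the affine cone over $X_Q$, hence contains $F$, which splits $Q$. Both arguments rest on the same geometric fact (that $V(\tld\omega^2)$ is the pure norm conic), but yours observes that the residue field kills $n_Q$, while the paper observes that $\fdiag{\tld\omega^2}$ fixes $n_Q$; the paper's version has the small advantage of staying in $W(\mathcal{O})$ without needing to identify the residue field, whereas yours is arguably more conceptual. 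One remark: your final paragraph upgrading $c\in W(k)$ to $c\in n_QW(k)$ via scalar extension to $F_L$ and Proposition \ref{prop_ker_psi} is unnecessary. Once the invariant (viewed with values in $W$) is shown to equal the constant $c$, evaluating it at any $h_0\in H_Q^{(1)}(k)$ over $k$ itself gives $c=\alpha_k(h_0)\in n_QW(k)$ directly, since $\alpha$ by hypothesis takes values in $n_QW$.
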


\begin{proof}
    Let $\alpha\in \Inv(H_Q^{(1)}, n_QW)$, which we can see as an invariant in 
    $\Inv(H_Q^{(1)}, W)$. Let $K = k(x,y,z)$. By \cite[Cor 27.13]{GMS}, since 
    $\tld{h} = \fdiag{\tld{\omega}}_{\gamma_K}$ is versal by Lemma \ref{lem_versal},
    $\alpha$ is constant if and only if $\alpha(\tld{h})\in W(K)$ is in the 
    image of $W(k)\to W(K)$. This is the case if and only if $\alpha(\tld{h})$
    is unramified along all hypersurfaces of $\mathbb{A}^3_k$ (see \cite[27.8]{GMS}).
    Since $\tld{h}$ corresponds to a torsor over $\mathbb{A}^3_k\setminus V(\tld{\omega}^2)$,
    $\alpha(\tld{h})$ can only be ramified along $V(\tld{\omega}^2)$ (\cite[Thm 27.11]{GMS}).
    But by hypothesis, $\alpha(\tld{h}) = n_{Q_K}q$ for some $q\in W(K)$. Let 
    $\mathcal{O} = k[x,y,z]_{(\tld{\omega}^2)}$ be the valuation ring of the 
    $\tld{\omega}^2$-adic valuation of $k[x,y,z]$.
    We can write $q = q_0 + \fdiag{\tld{\omega}^2}q_1$ with $q_0,q_1\in W(\mathcal{O})$,
    and since $\fdiag{\tld{\omega}^2}n_{Q_K}=n_{Q_K}$ because $-\tld{\omega}^2$ is 
    represented by $n_{Q_K}$ (it is the reduced norm of $\tld{\omega}$), we have
    $q\in W(\mathcal{O})$, ie it is unramified along $V(\tld{\omega}^2)$.
\end{proof}

We present a setting to use induction arguments for invariants.
Let $F: \mathbf{Field}_{/k}\to \mathbf{Set}$ and $A: \mathbf{Field}_{/k}\to \mathbf{Ab}$ 
be functors. We write $\underline{\Hom}$ for the internal $\Hom$ in 
a functor category, and $\underline{\Hom}_\Z$ for the internal $\Hom$
between two functors with values in abelian groups. By definition,
$\Inv_K(F,A) = \underline{\Hom}(F,A)(K)$.

Let $X$ be a finite set and $r\in \N$.
There is a canonical map 
\begin{equation}\label{eq_map_internal_hom}
    \underline{\Hom}_\Z(A^X,\underline{\Hom}(F,A)) \to 
    \underline{\Hom}_\Z(A^{X^r},\underline{\Hom}(F^r,A))
\end{equation}
defined through the isomorphisms $\underline{\Hom}_\Z(A^X,\underline{\Hom}(F,A))\simeq 
\underline{\Hom}(F\times X, \underline{\Hom}_\Z(A,A))$ and 
$\underline{\Hom}_\Z(A^{X^r},\underline{\Hom}(F^r,A))\simeq 
\underline{\Hom}(F^r\times X^r, \underline{\Hom}_\Z(A,A))$, where 
$X$ is seen as a constant functor, as well as the composition
\begin{equation}
    \underline{\Hom}(F\times X, \underline{\Hom}_\Z(A,A)) \to 
    \underline{\Hom}((F\times X)^r, \underline{\Hom}_\Z(A,A)^r) \to    
    \underline{\Hom}((F\times X)^r, \underline{\Hom}_\Z(A,A))
\end{equation}
where the first map is the diagonal embedding, and the second one is 
induced by the composition map $\underline{\Hom}_\Z(A,A)^r\to \underline{\Hom}_\Z(A,A)$
(that is, $(f_1,\dots,f_r)\mapsto f_1\circ\cdots\circ f_r$).

It is an easy fact to prove that under the canonical map (\ref{eq_map_internal_hom}),
an isomorphism $A^X\isom \underline{\Hom}(F,A)$ is sent to an isomorphism 
$A^{X^r},\underline{\Hom}(F^r,A)$. There are two special cases that are of 
interest to us, and we highlight them as lemmas.

\begin{lem}\label{lem_induction_constant}
    If the canonical map $A\to \underline{\Hom}(F,A)$ is an isomorphism,
    which means all invariants in $\Inv_K(F,A)$ are constant for 
    all $K/k$, then for any $r\in \N$, all invariants in $\Inv_K(F^r,A)$ 
    are also constant.
\end{lem}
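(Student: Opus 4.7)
The plan is to specialize the general framework established in the paragraph immediately preceding the lemma to the case where the auxiliary finite set $X$ is a singleton; the result then follows essentially for free from the ``easy fact'' stated there.

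More precisely, I would take $X = \{*\}$. Under this choice, the canonical identifications $A^X \simeq A$ and $A^{X^r} \simeq A$ are tautological, and the canonical map (\ref{eq_map_internal_hom}) reduces to a map
\[ \underline{\Hom}_\Z(A, \underline{\Hom}(F, A)) \To \underline{\Hom}_\Z(A, \underline{\Hom}(F^r, A)) \]
which, by the fact recalled just before the lemma, sends isomorphisms to isomorphisms.

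The next step is to trace the definition of this map and verify that it sends the canonical map $A \to \underline{\Hom}(F, A)$ (i.e.\ the one sending $a \in A(K)$ to the constant invariant of value $a$, which under the adjunction $\underline{\Hom}_\Z(A, \underline{\Hom}(F, A)) \simeq \underline{\Hom}(F, \underline{\Hom}_\Z(A, A))$ corresponds to the constant functor with value $\Id_A$) to the analogous canonical map $A \to \underline{\Hom}(F^r, A)$. With $X = \{*\}$ this is automatic: the diagonal embedding sends $\Id_A$ to $(\Id_A, \dots, \Id_A) \in \underline{\Hom}_\Z(A, A)^r$, and the composition map $\underline{\Hom}_\Z(A, A)^r \to \underline{\Hom}_\Z(A, A)$ sends this tuple back to $\Id_A$, corresponding again to the constant invariant.

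Combining these two observations, the hypothesis that $A \to \underline{\Hom}(F, A)$ is an isomorphism propagates to the conclusion that $A \to \underline{\Hom}(F^r, A)$ is an isomorphism, which is precisely the statement that every invariant of $F^r$ with values in $A$ is constant, for every extension $K/k$. There is no substantive obstacle in the argument; the only mild difficulty is bookkeeping the chain of adjunction isomorphisms so as to recognise that the map produced by (\ref{eq_map_internal_hom}) really is the expected ``constant invariant'' map $A \to \underline{\Hom}(F^r, A)$, and this becomes trivial once one specialises to a one-point $X$.
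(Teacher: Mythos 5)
Your argument is correct and follows the same route as the paper: specialize the construction preceding the lemma to $X=\{*\}$, invoke the stated fact that (\ref{eq_map_internal_hom}) preserves isomorphisms, and check that the constant-invariant map $A\to\underline{\Hom}(F,A)$ (corresponding to $\Id_A$) is carried to the constant-invariant map $A\to\underline{\Hom}(F^r,A)$ because the $r$-fold composite of the identity is the identity.
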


\begin{proof}
    It is straightforward to see that (\ref{eq_map_internal_hom}) (with $X=\{\ast\}$) 
    sends the canonical map $A\to \underline{\Hom}(F,A)$ to the canonical 
    map $A\to \underline{\Hom}(F^r,A)$, as it corresponds to the constant map 
    $\underline{\Hom}(F\times X, \underline{\Hom}_\Z(A,A))$ to the identity of $A$,
    and the $r$-fold composition of the identity is the identity.
\end{proof}

\begin{lem}\label{lem_induction_free}
    Suppose $A$ is actually a functor to the category of commutative rings.
    For any finite family $(\alpha_x)_{x\in X}\in \Inv(F,A)^X$, we define 
    $(\alpha_{\bar{x}})_{\bar{x}\in X^r}\in \Inv(F^r,A)^{X^r}$, where 
    \[ \alpha_{(x_1,\dots,x_r)}(f_1,\dots,f_r) = \prod_{i=1}^r \alpha_{x_i}(f_i). \]
    Assume that $(\alpha_x)_{x\in X}$ is a strong basis of $\Inv(F,A)$, meaning that 
    it is a basis as an $A(k)$-module which remains an $A(K)$-basis of $\Inv_K(F,A)$ 
    for all $K/k$. Then $(\alpha_{\bar{x}})_{\bar{x}\in X^r}$ is a strong basis of 
    $\Inv(F^r,A)$.
\end{lem}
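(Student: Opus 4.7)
The plan is to interpret the strong basis hypothesis on $(\alpha_x)_{x\in X}$ as the statement that a specific natural transformation is an isomorphism, invoke the isomorphism-preservation property of the canonical map (\ref{eq_map_internal_hom}) recalled just before the lemma, and then verify that the resulting isomorphism is the one attached to the product family $(\alpha_{\bar x})_{\bar x\in X^r}$.

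Concretely, I would package the family $(\alpha_x)_{x\in X}$ as the natural transformation $\Phi\colon A^X \to \underline{\Hom}(F, A)$ whose component at an extension $K/k$ sends $(a_x)_{x\in X}$ to $\sum_x a_x \cdot (\alpha_x)_K \in \Inv_K(F, A)$. The strong basis hypothesis is exactly the statement that each $\Phi_K$ is an $A(K)$-module isomorphism, hence that $\Phi$ is an isomorphism of functors. Applying (\ref{eq_map_internal_hom}) and using the fact recalled just before the lemma, we obtain that the image $\Psi\colon A^{X^r} \to \underline{\Hom}(F^r, A)$ of $\Phi$ is again an isomorphism of functors.

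It then remains to identify $\Psi$ with the natural transformation $(a_{\bar x}) \mapsto \sum_{\bar x} a_{\bar x} \cdot (\alpha_{\bar x})_K$. This is done by unwinding the adjunctions used to define (\ref{eq_map_internal_hom}). Under the first adjunction, $\Phi$ corresponds to the natural transformation $F \times X \to \underline{\Hom}_\Z(A, A)$ sending $(f, x)$ to the endomorphism of $A$ given by multiplication by $\alpha_x(f)$. The diagonal-then-compose construction then produces the natural transformation $F^r \times X^r \to \underline{\Hom}_\Z(A, A)$ sending $((f_i), (x_i))$ to the composition of the $r$ scalar-multiplication endomorphisms, which, because $A$ takes values in commutative rings, equals multiplication by $\prod_{i=1}^r \alpha_{x_i}(f_i) = \alpha_{\bar x}(f_1, \dots, f_r)$. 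Reversing the second adjunction gives precisely $\Psi_K((a_{\bar x})) = \sum_{\bar x} a_{\bar x} \cdot (\alpha_{\bar x})_K$, as desired.

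The main obstacle is really just the bookkeeping involved in chasing the adjunctions, and noting where commutativity of $A$ enters: it is used to turn the composition of multiplication-by-scalar endomorphisms into multiplication by the order-independent product that defines $\alpha_{\bar x}$.
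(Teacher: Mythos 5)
Your proof is correct and follows the same route as the paper: both package the family as a map $A^X\to\underline{\Hom}(F,A)$, invoke the isomorphism-preservation of (\ref{eq_map_internal_hom}), and identify the image with the map attached to the product family. You simply carry out explicitly the verification the paper dismisses with ``one easily checks,'' and usefully flag where commutativity of $A$ is needed to turn the composition of scalar-multiplication endomorphisms into multiplication by $\prod_i\alpha_{x_i}(f_i)$.
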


\begin{proof}
    The family $(\alpha_x)_{x\in X}$ defines a map $A^X\to \underline{\Hom}(F,A)$, 
    and one easily checks that (\ref{eq_map_internal_hom}) sends it to the map 
    $A^{X^r}\to \underline{\Hom}(F^r,A)$ defined by $(\alpha_{\bar{x}})_{\bar{x}\in X^r}$.
    Then we may conclude as $(\alpha_x)_{x\in X}$ is a strong basis if and only 
    if the corresponding $A^X\to \underline{\Hom}(F,A)$ is an isomorphism.
\end{proof}

Then we can use our induction properties to prove:

\begin{prop}\label{prop_constant}
    For any $r\in \N$, all invariants in $\Inv\left(\left(H_Q^{(1)}\right)^r, n_QW\right)$ and 
    $\Inv(H_Q^{(r)}, n_QW)$ are constant. An invariant in $I_Q^{(r)}$ is constant if 
    and only if its image in $\bar{I}_Q^{(r)}$ is constant.
\end{prop}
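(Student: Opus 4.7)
The plan is to bootstrap Proposition \ref{prop_constant_1} via the induction machinery of Lemma \ref{lem_induction_constant}, then to deduce the ``if and only if'' part by subtracting a constant representative of the image in $\bar{I}_Q^{(r)}$ and appealing to Corollary \ref{cor_nq_trivial}. First I would check the hypothesis of Lemma \ref{lem_induction_constant} for $F = H_Q^{(1)}$ and $A = n_QW$, namely that for every extension $K/k$ the group $\Inv_K(H_Q^{(1)}, n_QW)$ consists only of constant invariants. Over $k$ itself this is exactly Proposition \ref{prop_constant_1}. For a general $K$, I would split into cases: if $Q_K$ is split then $n_{Q_K} = 0$, the target functor is identically zero, and the claim is vacuous; otherwise $K$ is not quadratically closed, a quaternionic basis of $Q_K$ with $(ij)^2$ a non-square exists, and the versal-torsor argument of Proposition \ref{prop_constant_1} applies verbatim over $K$. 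Lemma \ref{lem_induction_constant} then yields the constancy of every invariant in $\Inv_K((H_Q^{(1)})^r, n_QW)$ for every $r$ and every $K/k$.

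Next I would use the canonical surjection of set-valued functors $(H_Q^{(1)})^r \twoheadrightarrow H_Q^{(r)}$, which induces the left vertical injection $\Inv(H_Q^{(r)}, n_QW) \hookrightarrow \Inv((H_Q^{(1)})^r, n_QW)$ in (\ref{eq_sequence_I_J}). Since every invariant in the target is constant by the previous step, so is every invariant in the source, proving the first assertion.

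For the final ``if and only if'' statement, the forward direction is immediate. Conversely, let $\alpha \in I_Q^{(r)}$ have constant image $\bar\alpha \in \bar{I}_Q^{(r)}$. I would lift $\bar\alpha$ to some $\alpha_0 \in \tld{W}^{-1}(Q,\gamma)$, viewed as a constant invariant in $I_Q^{(r)}$. By Corollary \ref{cor_nq_trivial} the kernel of $\tld{W}^{-1}(Q,\gamma) \to \tld{W}^{-1}(Q,\gamma)/(n_Q)$ is precisely $n_QW(k)$ (sitting in the even component), so $\alpha - \alpha_0$ takes values in $n_QW$ and hence lies in $\Inv(H_Q^{(r)}, n_QW)$. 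By the previous paragraph it is constant, and therefore $\alpha$ itself is constant. The only point that requires care is the base-change verification for Lemma \ref{lem_induction_constant} in the first step, but the split/non-split dichotomy of $Q_K$ reduces it either to Proposition \ref{prop_constant_1} applied over $K$ or to a vacuous statement.
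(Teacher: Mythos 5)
Your proof is correct and follows essentially the same route as the paper: apply Lemma~\ref{lem_induction_constant} with $F=H_Q^{(1)}$, $A=n_QW$ using Proposition~\ref{prop_constant_1} as input, transport to $H_Q^{(r)}$ via the injection from (\ref{eq_sequence_I_J}), and derive the ``if and only if'' from the top exact row. The one place where you go beyond the paper is the careful verification that the hypothesis of Lemma~\ref{lem_induction_constant} holds over every extension $K/k$ (via the split/non-split dichotomy of $Q_K$, the split case being vacuous since $n_{Q_K}=0$); the paper simply cites Proposition~\ref{prop_constant_1} and leaves this base-change point implicit, so your added care is worthwhile but does not change the argument's structure.
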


\begin{proof}
    From Proposition \ref{prop_constant_1}, the hypothesis of Lemma \ref{lem_induction_constant}
    is satisfied for $F = H_Q^{(1)}$ and $A=n_QW$, which settles the case of 
    $\Inv\left(\left(H_Q^{(1)}\right)^r, n_QW\right)$. Since $\Inv(H_Q^{(r)}, n_QW)$ is 
    embedded in $\Inv\left(\left(H_Q^{(1)}\right)^r, n_QW\right)$ (see \ref{eq_sequence_I_J}), 
    those invariants are also constant.

    The second statement is a straightfoward consequence of the first one, using the 
    top exact row in (see \ref{eq_sequence_I_J}).
\end{proof}

\subsection{Generic splitting}

Our description of $\bar{I}_Q^{(1)}$ will rely on generic 
splitting to reduce to our knowledge of invariants of 
$\Quad_2$.

Let $\alpha\in \bar{I}_Q^{(r)}$, and let $L/F$ be some 
extension. We define a function 
\begin{equation}
    \xi^{(r)}(\alpha): \Quad_{2r}(L)\to W(L)
\end{equation}
through the commutative diagram 
\begin{equation}
    \begin{tikzcd}
        H_{Q_F}^{(r)}(L) \rar{\alpha} \dar{(b_{\bar{\omega}_L})_*} & 
        \mathbf{\tld{W}^{-1}}(Q_L,\gamma_L) \dar{\Phi_{\bar{\omega}_L}} \\
        \Quad_{2r}(L) \rar{\xi^{(r)}(\alpha)} & W(L). 
    \end{tikzcd}
\end{equation}
This defines an invariant $\xi^{(r)}(\alpha)\in \Inv_F(\Quad_{2r}, W)$
because of the compatibility of Morita equivalences with scalar extensions,
expressed in the commutative diagram (\ref{eq_diagram_scalar_ext}).

We have thus defined a map
\begin{equation}
    \xi^{(r)}:  \bar{I}_Q^{(r)}\to \Inv_F(\Quad_{2r}, W).
\end{equation}

Similarly, we define 
\begin{equation}
    \zeta^{(r)}:  \bar{J}_Q^{(r)}\to \Inv_F(\Quad_{2}^r, W)
\end{equation}
through a diagram 
\begin{equation}
    \begin{tikzcd}
        H_{Q_F}^{(1)}(L)^r \rar{\alpha} \dar{(b_{\bar{\omega}_L})_*^r} & 
        \mathbf{\tld{W}^{-1}}(Q_L,\gamma_L) \dar{\Phi_{\bar{\omega}_L}} \\
        \Quad_{2}(L)^r \rar{\zeta^{(r)}(\alpha)} & W(L). 
    \end{tikzcd}
\end{equation}

By construction, the natural diagram 
\begin{equation}\label{eq_xi_zeta}
    \begin{tikzcd}
        \bar{I}_Q^{(r)} \rar{\xi^{(r)}} \dar & \Inv_F(\Quad_{2r}, W) \dar \\
        \bar{J}_Q^{(r)} \rar{\zeta^{(r)}} & \Inv_F(\Quad_{2}^r, W)
    \end{tikzcd}
\end{equation}
commutes.

Recall that $\Inv_F(\Quad_{2r}, W)$ is a $W(F)$-module, and 
through $\Psi_{\bar{\omega}}: \tld{W}^{-1}(Q,\gamma)\to W(F)$,
it is also a $\tld{W}^{-1}(Q,\gamma)$-module.

\begin{lem}\label{lem_xi_injective}
    The maps $\xi^{(r)}$ and $\zeta^{(r)}$ are morphisms of $\tld{W}^{-1}(Q,\gamma)$-modules.
    They are injective on $\mbox{}^0 \bar{I}_Q^{(r)}$, $\mbox{}^1 \bar{I}_Q^{(r)}$,
    $\mbox{}^0 \bar{J}_Q^{(r)}$ and $\mbox{}^1 \bar{J}_Q^{(r)}$.
\end{lem}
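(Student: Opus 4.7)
The plan is to reduce everything to the $K$-versions of Proposition \ref{prop_ker_psi}, exploiting naturality.

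\textbf{Module structure.} This is essentially formal. For any extension $L/F$, the Morita isomorphism $(b_{\bar\omega_L})_*$ followed by $\Phi_{\bar\omega_L}$ yields a $W(L)$-algebra morphism $\tld W^{-1}(Q_L,\gamma_L)\to W(L)$ which, precomposed with the scalar extension from $\tld W^{-1}(Q,\gamma)$, coincides with $\Psi_{\bar\omega}$ followed by $W(F)\to W(L)$, by compatibility of Morita with scalar extensions (diagram (\ref{eq_diagram_scalar_ext})). Applying multiplicativity of $\Phi_{\bar\omega_L}$ to $x_L\cdot \alpha(h)$ for $x\in\tld W^{-1}(Q,\gamma)$ and $h\in H_{Q_F}^{(r)}(L)$ then gives $\xi^{(r)}(x\alpha)=\Psi_{\bar\omega}(x)\cdot \xi^{(r)}(\alpha)$; the argument for $\zeta^{(r)}$ is identical.

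\textbf{Injectivity.} Take $\alpha\in \mbox{}^i\bar I_Q^{(r)}$ with $\xi^{(r)}(\alpha)=0$ (for $i\in\{0,1\}$), fix an extension $K/k$ and a form $h\in H_Q^{(r)}(K)$; I aim to show $\alpha(h)=0$. Set $L=K(Y_K)$: this is simultaneously a field extension of $K$ and of $F=k(Y)$. By naturality, $\alpha(h_L)=\alpha(h)_L$, so the commutative diagram defining $\xi^{(r)}$ evaluated at $q=(b_{\bar\omega_L})_*(h_L)$ yields
\begin{equation*}
    0 = \xi^{(r)}(\alpha)\bigl((b_{\bar\omega_L})_*(h_L)\bigr) = \Phi_{\bar\omega_L}(\alpha(h)_L) \in W(L).
\end{equation*}
The right-hand side is precisely $\Psi^K_{\bar\omega}(\alpha(h))$, where $\Psi^K_{\bar\omega}$ denotes the analog of $\Psi_{\bar\omega}$ constructed over $K$ (using $Y_K$ and $\bar\omega_K$ in place of $Y$ and $\bar\omega$). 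The $K$-version of Proposition \ref{prop_ker_psi} then concludes: on the even component, $\Psi^K_{\bar\omega}$ has kernel exactly $n_{Q_K}W(K)$, so $\alpha(h)$ vanishes in $W(K)/(n_{Q_K})$; on the odd component, $\Psi^K_{\bar\omega}$ is injective, so $\alpha(h)=0$ outright. The argument for $\zeta^{(r)}$ is strictly analogous, using $(H_Q^{(1)})^r$ and $\Quad_2^r$ in place of $H_Q^{(r)}$ and $\Quad_{2r}$.

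\textbf{Main obstacle.} None of the steps are deep; the only point deserving care is that Proposition \ref{prop_ker_psi} applies verbatim with $k$ replaced by an arbitrary extension $K$. This is clear because the underlying exact sequences from \cite{QT} are geometric and commute with base change, and in the degenerate case where $Q_K$ is already split, one has $W^{-1}(Q_K,\gamma_K)=0$ and $n_{Q_K}=0$ in $W(K)$, so the statement reduces to the classical injectivity of $W(K)\to W(K(t))$.
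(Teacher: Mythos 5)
Your proof is essentially the paper's: both compute the module structure directly via multiplicativity of $\Phi_{\bar\omega_L}$, and both prove injectivity by evaluating at an arbitrary $h\in H_Q^{(r)}(K)$ over the compositum $L = K(Y_K) = KF$ and invoking the base-changed version of Proposition \ref{prop_ker_psi} (the paper uses this implicitly in writing $\Psi_{\bar\omega_{KF}}$, which you make explicit). One slip in your final aside: when $Q_K$ is split one has $W^{-1}(Q_K,\gamma_K)\simeq W(K)$ via Morita equivalence (anti-hermitian forms over a split quaternion algebra with its symplectic involution correspond to symmetric bilinear forms), not $W^{-1}(Q_K,\gamma_K)=0$; this does not break the argument, since injectivity of $\Psi^K_{\bar\omega}$ on the odd part still reduces through that Morita isomorphism to injectivity of $W(K)\to W(K(t))$.
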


\begin{proof}
    The fact that $\xi$ is a $\tld{W}^{-1}(Q,\gamma)$-module morphism follows
    from the definition: let $\alpha\in \bar{I}_Q^{(r)}$ and $x\in \tld{W}^{-1}(Q,\gamma)$.
    Then for any extension $L/F$ and any $q\in \Quad_{2r}(L)$, if $h\in H_{Q}^{(r)}(L)$
    is such that $(b_{\bar{\omega}_L})_*(h)=q$, we have 
    \begin{align*}
        \xi^{(r)}(x\alpha)(q) &= \Phi_{\bar{\omega}_L}(x_L\alpha(h)) \\
        &= \Phi_{\bar{\omega}_L}(x_L)\Phi_{\bar{\omega}_L}(\alpha(h)) \\
        &= (\Phi_{\bar{\omega}_L}(x))_L\xi^{(r)}(\alpha)(q) \\
        &= (x\cdot \xi^{(r)}(\alpha))(q).
    \end{align*}
    If $\xi^{[r)}(\alpha)=0$, then let $K/k$ be an extension, and let 
    $h\in H_Q^{(r)}(K)$. Then by construction we have 
    \[ \Psi_{\bar{\omega}_{KF}}(\alpha(h)) = \xi^{(r)}(\alpha)((b_{\bar{\omega}_{KF}})_*(h_{KF}))=0. \]
    If $\alpha$ is in $\mbox{}^0 \bar{I}_Q^{(r)}$ or $\mbox{}^1 \bar{I}_Q^{(r)}$,
    then $\alpha(h)\in W(K)/(n_{Q_K})$ or $\alpha(h)\in W^{-1}(Q_K,\gamma_K)$; since
    $\Psi_{\omega_{KF}}$ is injective on these by Proposition \ref{prop_ker_psi},
    $\alpha(h)=0$, and therefore $\alpha=0$.

    The proofs are completely similar for $\zeta^{[r)}$ so we omit them.
\end{proof}

\section{Generators and relations}

In this section we give explicit presentations of our modules 
of invariants, the generators being given by the $\lambda^d$.

Precisely, for any $d,r\in \N$, the composition 
\begin{equation}
    H_{Q_K}^{(r)} \inj \tld{GW}^{-1}(Q_K,\gamma_K)\xrightarrow{\lambda^d} 
    \tld{GW}^{-1}(Q_K,\gamma_K) \to \tld{W}^{-1}(Q_K,\gamma_K)
\end{equation}
for all extensions $K/k$ form an invariant in $I_Q^{(r)}$, which 
we again denote $\lambda^d$. The compatibility with scalar extensions
is expressed by the fact that (\ref{eq_diagram_scalar_ext}) is a 
commutative diagram of pre-$\lambda$-rings.

The image of $\lambda^d$ in $\bar{I}_Q^{(r)}$ is 
written $\bar{\lambda}^d$. Note that if $d$ is even then 
$\lambda^d \in \mbox{}^0 \bar{I}_Q^{(r)}$, and if $d$ is odd 
then $\lambda^d \in \mbox{}^1 \bar{I}_Q^{(r)}$.

\subsection{Structure of $\bar{I}_Q^{(r)}$}

\begin{lem}\label{lem_xi_lambda}
    Let $r,d\in \N$. The morphism $\xi^{(r)}$ sends $\bar{\lambda}^d 
    \in \bar{I}_Q^{(r)}$ to $\lambda^d\in \Inv_F(Quad_{2r}, W)$.
\end{lem}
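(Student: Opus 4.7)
The plan is to unwind the definition of $\xi^{(r)}(\bar\lambda^d)(q)$, exploit that the Morita equivalence $(b_{\bar\omega_L})_*$ is an isomorphism of pre-$\lambda$-rings, and then perform an explicit computation inside $\tld{W}^1(L,\Id) \simeq W(L)[\Zd]$. Fix an extension $L/F$ and $q \in \Quad_{2r}(L)$, and choose $h \in H_{Q_F}^{(r)}(L)$ with $(b_{\bar\omega_L})_*(h) = q$. Since $L$ splits $Q$, we have $n_{Q_L} = 0$ in $W(L)$, so $\Phi_{\bar\omega_L}$ is well-defined on $\bar\lambda^d(h)$ and agrees with its value on the natural lift $\lambda^d(h) \in \tld{W}^{-1}(Q_L,\gamma_L)$. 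Using $\Phi_{\bar\omega_L} = \delta \circ (b_{\bar\omega_L})_*$ (equation \eqref{eq_def_phi}) together with the $\lambda$-compatibility of the top row of \eqref{eq_diagram_morita}, one obtains
\[
\xi^{(r)}(\bar\lambda^d)(q) = \delta\bigl((b_{\bar\omega_L})_*(\lambda^d(h))\bigr) = \delta\bigl(\lambda^d((b_{\bar\omega_L})_*(h))\bigr) = \delta(\lambda^d(q')),
\]
where $q' \in \tld{W}^1(L,\Id)$ is $q$ seen in the odd component; under the isomorphism with $W(L)[\Zd]$, $q'$ corresponds to $q \cdot t$ with $t$ the non-trivial generator.

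It then remains to verify $\delta(\lambda^d(q')) = \lambda^d(q) \in W(L)$. Writing $q = \fdiag{a_1,\ldots,a_{2r}}$, we have $q' = \sum_i \fdiag{a_i}_\Id$ in $GW^1(L,\Id)$. The crucial input from \cite{G2} is that each $\fdiag{a}_\Id$ is a line element in the pre-$\lambda$-ring $\tld{GW}^1(L,\Id)$---meaning $\lambda^0(\fdiag{a}_\Id) = \fdiag{1}$, $\lambda^1(\fdiag{a}_\Id) = \fdiag{a}_\Id$ and $\lambda^e(\fdiag{a}_\Id) = 0$ for $e \pgq 2$---which is the orthogonal, degree-one analogue of \eqref{eq_det_nrd}. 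The addition formula $\lambda_s(x+y) = \lambda_s(x)\lambda_s(y)$ then gives
\[
\lambda^d(q') = \sum_{i_1 < \cdots < i_d} \prod_{k=1}^d \fdiag{a_{i_k}}_\Id,
\]
and since $\delta$ is a $W(L)$-algebra morphism sending $\fdiag{a}_\Id \mapsto \fdiag{a}$, we conclude
\[
\delta(\lambda^d(q')) = \sum_{i_1 < \cdots < i_d} \fdiag{a_{i_1}\cdots a_{i_d}} = \lambda^d(q),
\]
as desired.

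The main obstacle is precisely this line-element property of $\fdiag{a}_\Id$ in $\tld{GW}^1(L,\Id)$; it is the bridge between the mixed pre-$\lambda$-ring construction of \cite{G2} and the classical $\lambda$-ring structure on $W(L)$, and has to be extracted from \cite{G2}. Once this is in hand, the rest of the argument is a formal combination of the definitions with standard pre-$\lambda$-ring manipulations, and works uniformly in $r$ since the computation proceeds generator-by-generator using the splitting $q' = \sum \fdiag{a_i}_\Id$.
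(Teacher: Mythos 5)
Your argument follows the same underlying route as the paper's --- exploiting the pre-$\lambda$-ring compatibility of the Morita isomorphism --- but it supplies a step the paper's one-sentence proof elides. The paper attributes the commutativity of the defining square entirely to the fact that (\ref{eq_diagram_morita}) is a diagram of pre-$\lambda$-rings; that takes care of the factor $(b_{\bar\omega_L})_*$ inside $\Phi_{\bar\omega_L}$, but $\Phi_{\bar\omega_L}$ is the composite in (\ref{eq_def_phi}) ending with $\delta$, and $\delta$ is introduced in the text only as a $W(k)$-algebra morphism, with no $\lambda$-compatibility stated. Your explicit computation --- writing $q' = \sum_i \fdiag{a_i}_\Id$, expanding $\lambda^d(q')$ via line elements and the addition formula, and then pushing through $\delta$ --- is precisely the missing verification that this last leg of $\Phi_{\bar\omega_L}$ also commutes with $\lambda^d$. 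You correctly flag the line-element property of $\fdiag{a}_\Id$ in $\tld{GW}^1(L,\Id)$ as the key input to be extracted from \cite{G2}, and it does hold (a rank-one module has vanishing exterior powers in degree $\pgq 2$, and the construction in \cite{G2} restricted to $(L,\Id)$ recovers the classical $\lambda$-ring structure on $GW(L)$). One small quibble: describing this as ``the orthogonal, degree-one analogue of (\ref{eq_det_nrd})'' is a bit off --- that formula concerns the top $\lambda$-power of a symplectic diagonal form, whereas what you actually need is the vanishing of $\lambda^e(\fdiag{a}_\Id)$ for $e \pgq 2$, which is a more elementary fact; this does not affect the correctness of your argument.
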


\begin{proof}
    The statement is equivalent to the commutativity of 
    \[ \begin{tikzcd}
        H_{Q_F}^{(r)}(L) \rar{\lambda^d} \dar{(b_{\bar{\omega}_L})_*} & 
        \mathbf{\tld{W}^{-1}}(Q_L,\gamma_L) \dar{\Phi_{\bar{\omega}_L}} \\
        \Quad_{2r}(L) \rar{\lambda^d} & W(L)
    \end{tikzcd} \]
    for all extensions $L/F$, and this is a simple consequence of the fact that 
    (\ref{eq_diagram_morita}) is a commutative diagram of pre-$\lambda$-rings.
\end{proof}

The crucial technical result is:

\begin{thm}\label{thm_inv_elem_nq}
    The $\tld{W}^{-1}(Q,\gamma)/(n_Q)$-module $\bar{I}_Q^{(1)}$
    is free, with basis $(\bar{\lambda}^0,\bar{\lambda}^1,\bar{\lambda}^2)$.
\end{thm}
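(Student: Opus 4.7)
The strategy is to leverage the injection $\xi^{(1)} : \bar{I}_Q^{(1)} \to \Inv_F(\Quad_2, W)$, which is injective on each graded component by Lemma \ref{lem_xi_injective}, and reduce to Serre's classical free decomposition of $\Inv_F(\Quad_2, W)$ over $W(F)$ with basis $(\lambda^0, \lambda^1, \lambda^2)$. The image of $\bar{\Psi}_{\bar{\omega}}$ is then controlled by Proposition \ref{prop_exact_gen_splitting}.

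For linear independence, I would start with a relation $\sum_{d=0}^{2} c_d \bar{\lambda}^d = 0$, decompose each $c_d = c_d^0 + c_d^1$ in the $\Zd$-grading of $\tld{W}^{-1}(Q,\gamma)/(n_Q) = W(k)/(n_Q) \oplus W^{-1}(Q,\gamma)$, and split the relation by the parity of each summand to obtain two independent relations in $\mbox{}^0 \bar{I}_Q^{(1)}$ and $\mbox{}^1 \bar{I}_Q^{(1)}$. Applying $\xi^{(1)}$ on each and using Lemma \ref{lem_xi_lambda} together with Serre's freeness gives $\bar{\Psi}_{\bar{\omega}}(c_d^j) = 0$ for all $d, j$, from which Proposition \ref{prop_ker_psi} extracts $c_d^j = 0$.

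For generation, given $\alpha = \alpha^0 + \alpha^1 \in \bar{I}_Q^{(1)}$, I would write the unique expansion $\xi^{(1)}(\alpha^j) = \beta_0^j + \beta_1^j \lambda^1 + \beta_2^j \lambda^2$ with $\beta_d^j \in W(F)$. The key sub-claim is that each $\beta_d^j$ lies in the image of $\bar{\Psi}_{\bar{\omega}}$ restricted to the homogeneous piece of degree $j + d \pmod 2$. Granted this, picking homogeneous preimages $\tilde{c}_d^j$ and forming $\alpha^j - \sum_d \tilde{c}_d^j \bar{\lambda}^d$ produces an element of $\mbox{}^j \bar{I}_Q^{(1)}$ killed by $\xi^{(1)}$, hence zero by Lemma \ref{lem_xi_injective}, and summing over $j$ exhibits $\alpha$ in the required form.

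The sub-claim is the hard part. I would evaluate $\xi^{(1)}(\alpha^j)$ on a generic form $\fdiag{X,Y} \in \Quad_2(F(X,Y))$ and isolate the $\beta_d^j$ as coordinates in the standard $W(F)$-independent family $\{1, \fdiag{X}, \fdiag{Y}, \fdiag{XY}\}$ of $W(F(X,Y))$. Since these coordinates ultimately come from evaluating $\alpha^j$ on an elementary form (transported via Morita by Lemma \ref{lem_morita_split_quat}), the versality argument underlying Proposition \ref{prop_constant_1} constrains their ramification and forces $\beta_d^j \in W_0(F)$; Proposition \ref{prop_exact_gen_splitting} then supplies a preimage, while the grading assignment follows from Morita preserving the $\Zd$-grading on mixed Witt rings. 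The main obstacle is precisely this step: checking that the unramifiedness of the versal value of $\alpha^j$ distributes cleanly among the separate $\beta_d^j$ and that the grading works out as claimed.
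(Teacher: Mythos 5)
Your framework is the right one: inject into $\Inv_F(\Quad_2, W)$ via $\xi^{(1)}$, invoke Serre's freeness there, and then pull back via Proposition \ref{prop_exact_gen_splitting} and the injectivity of $\Psi_{\bar{\omega}}$ on graded components. The linear-independence step is fine, and you have correctly isolated the crux of the generation step: one must show that the unique coefficients $\beta_d$ in $\xi^{(1)}(\alpha) = \beta_0 + \beta_1\lambda^1 + \beta_2\lambda^2$ land in $W_0(F)$ with the correct grading after lifting. But the mechanism you propose for this does not work, and you essentially concede as much when you flag it as ``the main obstacle.''

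The specific flaw is in using the generic binary form $\fdiag{X,Y}$ over $F(X,Y)$ to isolate the $\beta_d$ and then trying to constrain them by versality. Over $F(X,Y)$ the algebra $Q$ is split, so the elementary form to which $\fdiag{X,Y}$ corresponds under Morita is an arbitrary anti-hermitian form over a \emph{split} quaternion algebra. No versality or unramifiedness constraint on $\alpha$ attaches to this situation: the constraint that $\alpha(h)$ be unramified (Proposition \ref{prop_constant_1}) comes precisely from evaluating at a form defined over a field extension of $k$ where $Q$ is \emph{not} split, and then tracking what happens upon base change to a splitting field. Working directly over $F(X,Y)$ discards exactly the arithmetic information you need. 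This is why the paper instead evaluates on $h = \fdiag{\omega\otimes 1}_{\gamma_{KF}}$ over the bifield $KF = k(x,y)\otimes_k F$: the value $\alpha(h_K)$ lives in $\tld{W}^{-1}(Q_K,\gamma_K)/(n_{Q_K})$ with $Q_K$ non-split, so scalar-extending and applying $\Psi_{\bar{\omega}_{KF}}$ produces an element $\theta$ that satisfies hard residue constraints by (\ref{eq_sequence_even}) and (\ref{eq_sequence_odd}).

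The piece genuinely missing from your argument is how to \emph{disentangle} the three coefficients from the single element $\theta$. Under Morita the elementary form transports to $q = \fdiag{f}\pfis{g}$ (Lemma \ref{lem_morita_split_quat}) with $g = -\Delta\otimes 1$ fixed, so $\theta$ mixes $q_0$, $q_1$, $q_2$ in a Pfister-like expression rather than presenting them in independent slots like $\{1,\fdiag{X},\fdiag{Y},\fdiag{XY}\}$. The paper's resolution is a two-step residue: first the residue $\partial^i_w$ along the divisor $\mathbb{P}^2\times\{p\}$ of $\mathbb{P}^2\times X_Q$, then a composite rank-$2$ valuation $u = u''\circ u'$ on the residue field, set up so that $\Delta\otimes 1$ and $\bar f$ become independent uniformizers. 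This works because of the geometric observation that the vanishing locus of $f$ in $Y\times Y$ is the diagonal, forcing $u''([f])=1$. Without a replacement for this device, your sub-claim that ``the unramifiedness distributes cleanly among the separate $\beta_d^j$'' is an unproved assertion, so the proposal has a real gap at exactly the point it identifies.
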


\begin{proof}
    Technically, in order to use our results on generic splitting, we 
    need to distinguish the case where $k$ is quadratically closed, since 
    this is the only case where our description of $F$ using a closed 
    point of degree $2$ does not apply. When $k$ is quadratically closed,
    $Q$ is split, and the result is easily reduced using a Morita equivalence
    to the case of Witt invariants of $\Quad_2$, which is treated in \cite[Thm 27.16]{GMS}.
    We now exclude $k$ being quadratically closed for the rest of the proof.

    Let $\alpha\in \bar{I}_Q^{(1)}$. Then by \cite[Thm 27.16]{GMS}, $\xi^{(1)}(\alpha)$
    can be written $q_0+q_1\cdot \lambda^1+q_2\cdot \lambda^2$ for unique elements
    $q_i\in W(F)$. We study the residues of the $q_i$ with respect to 
    the valuations coming from closed points of $X_Q$.

    Let us write $K = k(x,y)$, and $KF = K\otimes_k F$.
    For any field extension $E/L$, we will write $\rho_{E/L}$
    for the scalar extension morphism $W(E)\to W(L)$.
    Let us consider 
    \begin{align*}
        h &= \fdiag{\omega\otimes 1}_{\gamma_{KF}}\in H_Q^{(1)}(KF)\\
        q &=  \Psi_{\bar{\omega}_{KF}}(h) \in \Quad_{2r}(KF) \\
        \theta &= \rho_{KF/F}(q_0)+q\rho_{KF/F}(q_1)+\fdiag{\det(q)}\rho_{KF/F}(q_2) \in W(KF).
    \end{align*}
    Then using Lemma \ref{lem_xi_lambda}, we have
    \[ \theta = \xi^{(1)}(\alpha)(q) = \Psi_{\bar{\omega}_{KF}}(\alpha(h_{KF})). \] 
    According to Lemma \ref{lem_morita_split_quat}, we can write $q = \fdiag{f}\pfis{g}$
    with 
    \begin{align*}
        g &= -\Delta \otimes 1 \in k[x,y]\otimes_k k \subset KF \\ 
          &= a x^2\otimes 1 + b y^2\otimes 1 - ab\otimes 1 \\
        f &= -\Trd_{Q_{KF}}(\omega\otimes \bar{\omega})\in k[x,y]\otimes k[\bar{x},\bar{y}]\subset KF \\ 
          &= -2(a x\otimes \bar{x} + b y\otimes \bar{y} - ab \otimes 1).
    \end{align*}
    
    Let $v$ be a valuation on $F$ which is either $v_p$ for 
    some $p\in X_Q^{(1)}$, with valuation ring $\mathcal{O}_v$,
    uniformizer $\pi$ and residue field $F_v$. We can have $p\in Y^{(1)}$ or $p=\infty$.
    Then $v$ extends naturally 
    to a discrete rank $1$ valuation $w$ on $KF$ with residue field $KF_v$,
    trivial on $K\otimes_k k$. 
    If we see $KF$ as the function field of $\mathbb{P}^2\times X_Q$, then $w$ 
    is the valuation associated to the hypersurface 
    $\mathbb{P}^2\times \{p\}$.
    
    We write $f = (1\otimes \pi^l) f'$ with $l\in \Z$ and $f'\in k[x,y]\otimes_k \mathcal{O}_v$.
    If $p \in Y^{(1)}$ we simply take $l=0$ and $f'=f$, and if $p=\infty$ 
    we can take $l=-1$. The image of $f'$ in $k[x,y]\otimes_k F_v$ is written 
    $\bar{f}$. Let us take $i\in \{1,2\}$. Then for any $m\in \{0,1,2\}$
    we have in $W(KF_v)$:
    \begin{align*}
        \partial^i_w(q_m) &= \rho_{KF_v/F_v}(\partial^i_v(q_0)) \\
        \partial^i_w(\fdiag{f}) &= \left\{ \begin{array}{lc}
            0 & \text{if $i=2$ and $p\in Y^{(1)}$, or $i=1$ and $p=\infty$} \\
            \fdiag{\bar{f}} & \text{if $i=1$ and $p\in Y^{(1)}$, or $i=2$ and $p=\infty$} \\
        \end{array}  \right. \\ 
        \partial^1_w(\fdiag{g}) &= \fdiag{-\Delta\otimes 1} \\
        \partial^1_w(\fdiag{g}) &= 0
    \end{align*}
    and therefore
    \begin{align}\label{eq_res_theta}
        \partial^i_w(\theta) &= \rho_{KF_v/F_v}(\partial^i_v(q_0)) 
        +\fdiag{\bar{f}}\pfis{-\Delta\otimes 1}\rho_{KF_v/F_v}(\partial^j_v(q_1))\\
        &+\fdiag{\Delta\otimes 1}\rho_{KF_v/F_v}(\partial^i_v(q_2)) \in W(KF_v)
    \end{align}
    where $j=i$ if $p\in Y^{(1)}$ and $j\neq i$ if $p = \infty$.
    
    We consider that discrete rank $1$ valuation $u'$ on $KF_v$ corresponding 
    to the hypersurface $X_Q\times \{p\}$ in $\mathbb{P}^2\times \{p\}$,
    with residue field $F\otimes_k F_v$. Then we take a valuation $u''$
    on $F\otimes_k F_v$ corresponding to any one of the two $F_v$-rational points 
    in $\{p\}\times \{p\}\simeq \Spec(F_v)\times \Spec(F_v)$. A crucial fact
    is that if $[f]\in F\otimes_k F_v$ is the image of $\bar{f}\in k[x,y]\otimes_k F_v$,
    then $u''([f])=1$. Indeed, the hypersurface of $Y\times Y$ defined by the 
    image of $f\in k[x,y]\otimes_k k[\bar{x},\bar{y}]$
    in $k[\bar{x},\bar{y}]\otimes_k k[\bar{x},\bar{y}] = k[Y]\otimes_k k[Y]$ is 
    the diagonal embedding $Y\to Y\times Y$. Thus the valuation of $[f]$ 
    at any point in the intersection of the diagonal of $X_Q\times X_Q$ and $X_Q\times \{p\}$,
    which is precisely $\{p\}\times \{p\}$, is $1$.
    This means that if $u = u''\circ u'$ is the composed valuation on 
    $KF_v$, with value group $\Z^2$ and residue field $F_v$, then 
    $u(-\Delta\otimes 1) = (1,0)$ and $u(\bar{f}) = (0,1)$. 
    One also sees that $k\otimes_k F_v\subset \mathcal{O}_u^\times$.
    
    Using 
    $\Delta\otimes 1$ and $\bar{f}$ as uniformizers, we get a residue map 
    \[ \partial_u: W(KF_v) \to W(F_v)[(\Zd)^2] \]
    such that, using (\ref{eq_res_theta}):
    \begin{equation}\label{eq_res_theta_u}
        \partial_u(\partial^i_w(\theta)) = (\partial^i_v(q_0), \partial^j_v(q_1), \partial^i_v(q_2),
        \partial^j_v(q_1)) \in W(F_v)[(\Zd)^2].
    \end{equation}
    
    Since $\theta$ is in the image of $\Psi_{\bar{\omega}_{KF}}$,
    by the exact sequences (\ref{eq_sequence_even}) and (\ref{eq_sequence_odd})
    we have that $\partial^i_w(\theta)=0$ when $i=2$ and $p\in Y^{(1)}$,
    which yields
    \[ \partial^2_v(q_0) = \partial^2_v(q_1) = \partial^2_v(q_2) = 0, \]
    in other words $q_0,q_1,q_2\in W_0(F)$. From this point we assume that 
    $p=\infty$.

    Now assume $\alpha\in \mbox{}^0 \bar{I}_Q^{(r)}$. Then $\alpha(h)\in W(K)/(n_{Q_K})$
    so by (\ref{eq_sequence_even}) we have $\partial^2_w(\theta)=0$, which yields 
    with (\ref{eq_res_theta_u}):
    \[ \partial^2_v(q_0) = \partial^1_v(q_1) = \partial^2_v(q_2) = 0, \]
    so $q_0 = \Psi_{\bar{\omega}}(\phi_0)$, $q_1 = \Psi_{\bar{\omega}}(h_1)$
    and $q_2 = \Psi_{\bar{\omega}}(\phi_2)$, with $\phi_0,\phi_2\in W(k)$
    uniquely determined modulo $n_Q$, and $h_1\in W^{-1}(Q,\gamma)$ uniquely 
    determined. This shows that 
    \[ \xi^{(1)}(\alpha)=\xi^{(1)}(\phi_0 + h_1\bar{\lambda}^1 + \phi_2\bar{\lambda}^2) \]   
    and by injectivity of $\xi^{(1)}$ on $\mbox{}^0 \bar{I}_Q^{(r)}$ (Lemma \ref{lem_xi_injective}),
    we deduce that 
    \[ \alpha = \phi_0 + h_1\bar{\lambda}^1 + \phi_2\bar{\lambda}^2, \]
    for unique $\phi_0,\phi_2\in W(k)/(n_Q)$ and a unique $h_1\in W^{-1}(Q,\gamma)$.  

    Likewise, assume that $\alpha\in \mbox{}^1 \bar{I}_Q^{(r)}$. This time 
    $\alpha(h)\in W^{-1}(Q_K,\gamma_K)$ so by (\ref{eq_sequence_even}) we have 
    $\partial^1_w(\theta)=0$, then 
    \[ \partial^1_v(q_0) = \partial^2_v(q_1) = \partial^1_v(q_2) = 0, \]
    and again by injectivity of $\xi^{(1)}$ on $\mbox{}^1 \bar{I}_Q^{(r)}$,
    we have unique $h_0,h_2\in W^{-1}(Q,\gamma)$ and a unique $\phi_1\in W(k)/(n_Q)$
    such that 
    \[ \alpha = h_0 + \phi_1\bar{\lambda}^1 + h_2\bar{\lambda}^2. \]

    In the general case, we write $\alpha = \alpha_0 + \alpha_1$ with 
    $\alpha_0\in \mbox{}^0 \bar{I}_Q^{(r)}$ and 
    $\alpha_1\in \mbox{}^1 \bar{I}_Q^{(r)}$, and the two previous points 
    show that there are unique $x_0, x_1, x_2\in \tld{W}^{-1}(Q,\gamma)/(n_Q)$
    such that 
    \[ \alpha = x_0 + x_1\bar{\lambda}^1 + x_2\bar{\lambda}^2. \]
\end{proof}

We get the more general case by induction. First:

\begin{coro}\label{cor_bar_j}
    For any $r\in \N$,
    the $\tld{W}^{-1}(Q,\gamma)/(n_Q)$-module $\bar{J}_Q^{(r)}$
    is free, with basis $(\bar{\lambda}^{\underline{d}})_{\underline{d}\in \{0,1,2\}^r}$,
    where 
    \[ \bar{\lambda}^{\underline{d}}(h_1,\dots,h_r) = \prod_{i=1}^r \bar{\lambda}^{d_i}(h_i).\]

    If $\alpha = \sum_{\underline{d}} x_{\underline{d}}\bar{\lambda}^{\underline{d}}$,
    then $\alpha\in \mbox{}^0 \bar{J}_Q^{(r)}$ if and only if $x_{\underline{d}}\in W(k)/(n_Q)$
    for all $\underline{d}$ with $|\underline{d}| = \sum_i d_i$ is even, and 
    $x_{\underline{d}}\in W^{-1}(Q,\gamma)$ when $|\underline{d}|$ is odd. Similarly,
    $\alpha\in \mbox{}^1 \bar{J}_Q^{(r)}$ if and only if $x_{\underline{d}}\in W^{-1}(Q,\gamma)$ 
    when $|\underline{d}|$ is even and $x_{\underline{d}}\in W(k)/(n_Q)$ 
    when $|\underline{d}|$ is odd.
\end{coro}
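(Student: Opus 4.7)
The plan is to derive this from Theorem \ref{thm_inv_elem_nq} by a single application of Lemma \ref{lem_induction_free}, followed by a short bookkeeping argument for the grading. First I would observe that $\bar{J}_Q^{(1)} = \bar{I}_Q^{(1)}$ by definition, so Theorem \ref{thm_inv_elem_nq} provides the basis $(\bar{\lambda}^0, \bar{\lambda}^1, \bar{\lambda}^2)$ of $\bar{J}_Q^{(1)}$ over $\tld{W}^{-1}(Q,\gamma)/(n_Q)$ in the case $r=1$. To feed this into Lemma \ref{lem_induction_free}, I need to know that this is a \emph{strong} basis, i.e.\ that the same triple remains a basis after extending scalars to an arbitrary $K/k$. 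This follows because the proof of Theorem \ref{thm_inv_elem_nq} is geometric in nature (generic splitting, residues at closed points of $X_{Q_K}$) and uses no property of $k$ beyond the characteristic hypothesis; it therefore runs verbatim with $K$ in place of $k$, with the quadratically closed case again handled via the Morita reduction noted in that theorem.

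Next I would verify that $\underline{\tld{W}^{-1}(Q,\gamma)}/(n_Q)$ is genuinely a functor to commutative rings: $n_Q \in W(k) \subset \tld{W}^{-1}(Q,\gamma)$ generates a principal ideal (in fact equal to $n_Q W(k)$ by Corollary \ref{cor_nq_trivial}), and scalar extension is a ring morphism that preserves this ideal. I can then invoke Lemma \ref{lem_induction_free} with $F = H_Q^{(1)}$, $A = \underline{\tld{W}^{-1}(Q,\gamma)}/(n_Q)$, $X = \{0,1,2\}$, and the strong basis $(\bar{\lambda}^x)_{x\in X}$. The output is precisely a strong basis of $\Inv((H_Q^{(1)})^r, A) = \bar{J}_Q^{(r)}$ indexed by $X^r = \{0,1,2\}^r$, with the multiplicative formula $\bar{\lambda}^{\underline{d}}(h_1,\dots,h_r) = \prod_i \bar{\lambda}^{d_i}(h_i)$ being the one delivered by the lemma.

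For the parity statement, I would note that the target functor carries a $\Zd$-grading with even part $W(k)/(n_Q)$ and odd part $W^{-1}(Q,\gamma)$, which induces a $\Zd$-grading on $\bar{J}_Q^{(r)}$. Since $\bar{\lambda}^d$ has grading degree $d \bmod 2$, the basis element $\bar{\lambda}^{\underline{d}}$ lies in degree $|\underline{d}| \bmod 2$, so the basis produced above is graded. Writing $\alpha = \sum x_{\underline{d}}\,\bar{\lambda}^{\underline{d}}$ and decomposing each $x_{\underline{d}}$ into its even and odd components, the uniqueness of the expansion forces the grading of $x_{\underline{d}}$ to be complementary to $|\underline{d}| \bmod 2$ relative to the grading of $\alpha$. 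This yields the two stated conditions for $\alpha$ to lie in $\mbox{}^0 \bar{J}_Q^{(r)}$ or $\mbox{}^1 \bar{J}_Q^{(r)}$.

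There is no genuine obstacle here once Theorem \ref{thm_inv_elem_nq} is in hand; the only point that deserves care is the strongness of the base-case basis, which is not part of the theorem's statement but follows from inspection of its proof. Everything else is a formal consequence of the categorical machinery set up in Lemmas \ref{lem_induction_constant} and \ref{lem_induction_free}.
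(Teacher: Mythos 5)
Your proposal is correct and follows essentially the same route as the paper: a direct application of Lemma \ref{lem_induction_free} using $\bar{J}_Q^{(1)} = \bar{I}_Q^{(1)}$, followed by the parity bookkeeping via graded decomposition of the coefficients. You are somewhat more careful than the paper in explicitly verifying the \emph{strong} basis hypothesis of Lemma \ref{lem_induction_free} --- that the triple $(\bar{\lambda}^0,\bar{\lambda}^1,\bar{\lambda}^2)$ remains a basis of $\Inv_K(H_Q^{(1)},\cdot)$ over each extension $K/k$, because the proof of Theorem \ref{thm_inv_elem_nq} applies verbatim over $K$ --- a point the paper leaves implicit but which is genuinely required by the lemma.
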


\begin{proof}
    This is a direct application of Lemma \ref{lem_induction_free}, since 
    $\bar{I}_Q^{(1)} = \bar{J}_Q^{(1)}$.

    Clearly, $\bar{\lambda}^{\underline{d}}\in \mbox{}^0 \bar{J}_Q^{(r)}$ 
    when $|\underline{d}|$ is even, and $\bar{\lambda}^{\underline{d}}\in \mbox{}^1 \bar{J}_Q^{(r)}$ 
    when $|\underline{d}|$ is odd. So if $\alpha = \sum_{\underline{d}} x_{\underline{d}}\bar{\lambda}^{\underline{d}}$,
    and $x_{\underline{d}} = q_{\underline{d}}+h_{\underline{d}}$, then $\alpha = \alpha_0 + \alpha_1$ with 
    \[  \alpha_0 = \sum_{|\underline{d}| \text{even}} q_{\underline{d}}\bar{\lambda}^{\underline{d}}
    + \sum_{|\underline{d}| \text{odd}} h_{\underline{d}}\bar{\lambda}^{\underline{d}}  \]
    and 
    \[  \alpha_1 = \sum_{|\underline{d}| \text{even}} h_{\underline{d}}\bar{\lambda}^{\underline{d}}
    + \sum_{|\underline{d}| \text{odd}} q_{\underline{d}}\bar{\lambda}^{\underline{d}}  \]
    and since $\alpha_0 \in \mbox{}^0 \bar{J}_Q^{(r)}$ and $\alpha_1 \in \mbox{}^1 \bar{J}_Q^{(r)}$,
    this is the unique decomposition, and the result follows.
\end{proof}

We can then deduce:

\begin{thm}\label{thm_inv_nq}
    For any $r\in \N$,
    the $\tld{W}^{-1}(Q,\gamma)/(n_Q)$-module $\bar{I}_Q^{(r)}$
    is free, with basis $(\bar{\lambda}^0,\dots,\bar{\lambda}^{2r})$.
\end{thm}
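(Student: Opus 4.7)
The approach is to use the injection $\bar I_Q^{(r)} \hookrightarrow \bar J_Q^{(r)}$ from (\ref{eq_sequence_I_J}) together with Corollary~\ref{cor_bar_j}, and to identify the image by comparing Serre's bases of $\Inv_F(\Quad_{2r}, W)$ and $\Inv_F(\Quad_2^r, W)$ through the commutative square (\ref{eq_xi_zeta}).

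Fix $\alpha \in \bar I_Q^{(r)}$; splitting into graded pieces, we may assume $\alpha$ is homogeneous in the $\Zd$-grading. Viewed in $\bar J_Q^{(r)}$, Corollary~\ref{cor_bar_j} gives a unique decomposition $\alpha = \sum_{\underline d \in \{0,1,2\}^r} y_{\underline d}\, \bar\lambda^{\underline d}$, where each $y_{\underline d}$ lies in $W(k)/(n_Q)$ or $W^{-1}(Q,\gamma)$ according to $|\underline d|$ and the grading of $\alpha$. The entire argument reduces to showing that $y_{\underline d}$ depends only on $|\underline d|$: then, setting $x_d := y_{\underline d}$ for any $\underline d$ with $|\underline d|=d$ and using the Whitney formula for pre-$\lambda$-rings (with $\lambda^e(h_i)=0$ for $e>2$ since $h_i$ is $2$-dimensional) to identify the image of $\bar\lambda^d \in \bar I_Q^{(r)}$ in $\bar J_Q^{(r)}$ with $\sum_{|\underline d|=d} \bar\lambda^{\underline d}$, we obtain $\alpha = \sum_{d=0}^{2r} x_d\, \bar\lambda^d$, with uniqueness of the $x_d$ inherited from that of the $y_{\underline d}$.

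To show that $y_{\underline d}$ depends only on $|\underline d|$, compute $\zeta^{(r)}(\alpha) \in \Inv_F(\Quad_2^r, W)$ in two ways using (\ref{eq_xi_zeta}). Directly from $\bar J_Q^{(r)}$, the analog of Lemma~\ref{lem_xi_lambda} for $\zeta^{(r)}$ (immediate since $\Phi_{\bar\omega}$ is a ring homomorphism and $\bar\lambda^{\underline d} = \prod_i \bar\lambda^{d_i}$ by definition) yields $\zeta^{(r)}(\alpha) = \sum_{\underline d} \Psi_{\bar\omega}(y_{\underline d})\, \lambda^{\underline d}$. Going through $\Inv_F(\Quad_{2r}, W)$ instead, Serre's theorem \cite[Thm~27.16]{GMS} writes $\xi^{(r)}(\alpha) = \sum_{d=0}^{2r} q_d\, \lambda^d$ with $q_d \in W(F)$, and its pullback along the orthogonal sum map $\Quad_2^r \to \Quad_{2r}$ sends $\lambda^d$ to $\sum_{|\underline d|=d} \lambda^{\underline d}$ by the Whitney formula, so $\zeta^{(r)}(\alpha) = \sum_{\underline d} q_{|\underline d|}\, \lambda^{\underline d}$. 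Since $(\lambda^{\underline d})_{\underline d}$ is a $W(F)$-basis of $\Inv_F(\Quad_2^r, W)$ (Serre plus Lemma~\ref{lem_induction_free}), comparing coefficients forces $\Psi_{\bar\omega}(y_{\underline d}) = q_{|\underline d|}$. Hence $\Psi_{\bar\omega}(y_{\underline d})$ depends only on $|\underline d|$; as all such $y_{\underline d}$ lie in the same graded summand of $\tld W^{-1}(Q,\gamma)/(n_Q)$, on which $\Psi_{\bar\omega}$ is injective by Proposition~\ref{prop_ker_psi}, the value $y_{\underline d}$ itself has the same property.

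\textbf{Main obstacle.} Granted Theorem~\ref{thm_inv_elem_nq} and Corollary~\ref{cor_bar_j}, no serious conceptual difficulty remains; the argument is essentially bookkeeping. The one delicate ingredient is keeping track of the $\Zd$-grading so that Proposition~\ref{prop_ker_psi} applies and allows us to lift the equality $\Psi_{\bar\omega}(y_{\underline d}) = \Psi_{\bar\omega}(y_{\underline{d'}})$ to $y_{\underline d} = y_{\underline{d'}}$ whenever $|\underline d| = |\underline{d'}|$.
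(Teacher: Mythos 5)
Your proof is correct and follows essentially the same route as the paper: inject $\bar I_Q^{(r)}$ into $\bar J_Q^{(r)}$, compare the two expressions for $\zeta^{(r)}(\alpha)$ coming from Corollary~\ref{cor_bar_j} and from pushing Serre's decomposition of $\xi^{(r)}(\alpha)$ along the Whitney formula, then use injectivity of $\Psi_{\bar\omega}$ on each graded piece (Proposition~\ref{prop_ker_psi}) to conclude that $y_{\underline d}$ depends only on $|\underline d|$. Your write-up is in fact slightly cleaner than the paper's in making explicit both the graded reduction and the identification of the image of $\bar\lambda^d$ in $\bar J_Q^{(r)}$.
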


\begin{proof}
    Let $\alpha\in \bar{I}_Q^{(r)}$, and $\beta$ its image in 
    $\bar{J}_Q^{(r)}$. We write 
    \[  \beta = \sum_{\underline{d}}x_{\underline{d}}\bar{\lambda}^{\underline{d}}.  \]
    We can also write 
    \[ \xi^{(r)}(\alpha) = \sum_{d=0}^{2r} y_d \lambda^d \]   
    for unique elements $y_d\in W(F)$.
    The image of $\lambda^d\in \Inv_F(\Quad_{2r}, W)$ in 
    $\Inv_F(\Quad_{2}^r, W)$ is $\sum_{|\underline{d}|=d} \lambda^{\underline{d}}$,
    which implies by the diagram (\ref{eq_xi_zeta}):
    \[ \zeta^{(r)}(\beta) = \sum_{\underline{d}}\psi_{\bar{\omega}}(x_{|\underline{d}|})\bar{\lambda}^{\underline{d}}  \]
    and therefore by uniqueness of the decomposition:
    \[  x_{\underline{d}} = \Psi_{\bar{\omega}}(x_{|\underline{d}|})  \]
    for all $\underline{d}$.

    If $\alpha\in \mbox{}^0 \bar{I}_Q^{(r)}$ or $\alpha\in \mbox{}^1 \bar{I}_Q^{(r)}$,
    then each $x_{\underline{d}}$ is in $W(k)/(n_Q)$ or $W^{-1}(Q,\gamma)$ by 
    Corollary \ref{cor_bar_j}, so by injectivity of $\Psi_{\bar{\omega}}$ on these 
    groups (Proposition \ref{prop_ker_psi}), $x_{\underline{d}}$ only depends on 
    $|\underline{d}|$. If $x_{|\underline{d}|}$ is this common value, 
    then 
    \[  \beta = \sum_{d=0}^{2r} x_d\left(\sum_{|\underline{d}|=d} \bar{\lambda}^{\underline{d}} \right).  \]
    Since $\sum_{|\underline{d}|=d} \bar{\lambda}^{\underline{d}}$ is the image of $\bar{\lambda}^d\in \bar{I}_Q^{(r)}$
    in $\bar{J}_Q^{(r)}$, by injectivity of $\bar{I}_Q^{(r)}\to \bar{J}_Q^{(r)}$ we get 
    \[  \alpha = \sum_{d=0}^{2r} x_d \bar{\lambda}^d  \]
    for unique elements $x_d$ (with $x_d\in W(k)/(n_Q)$ if $d$ is even and $x_d\in W^{-1}(Q,\gamma)$
    if $d$ is odd).

    Similarly, if $\alpha\in \mbox{}^1 \bar{I}_Q^{(r)}$ we deduce that $\alpha = \sum_{d=0}^{2r} x_d \bar{\lambda}^d$
    for unique elements $x_d$, with $x_d\in W(k)/(n_Q)$ if $d$ is odd and $x_d\in W^{-1}(Q,\gamma)$
    if $d$ is even.

    In general, we decompose $\alpha$ as $\alpha_0+\alpha_1$ with $\alpha_i\in \mbox{}^i \bar{I}_Q^{(r)}$
    to get the expected result.
\end{proof}

\begin{coro}\label{cor_sequence_I_J}
    The commutative diagram (\ref{eq_sequence_I_J}) with exact lines can be refined as 
    \[ \begin{tikzcd}
            0 \rar & n_QW \rar \dar & I_Q^{(r)} \rar \dar & \bar{I}_Q^{(r)} \dar \rar & 0 \\
            0 \rar &  n_QW \rar & J_Q^{(r)} \rar  & \bar{J}_Q^{(r)} \rar & 0
        \end{tikzcd} \]
\end{coro}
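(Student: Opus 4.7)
The plan is to deduce the corollary from Proposition \ref{prop_constant} (for the two leftmost entries) and from Theorem \ref{thm_inv_nq} together with Corollary \ref{cor_bar_j} (for the surjectivity of the two right-hand maps).

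First, I will replace the $\Inv$-groups in the left column of (\ref{eq_sequence_I_J}) by $n_QW(k)$. By Proposition \ref{prop_constant}, every element of $\Inv(H_Q^{(r)}, n_QW)$ and of $\Inv((H_Q^{(1)})^r, n_QW)$ is constant, so both groups identify canonically with $n_QW(k)$. The vertical map between them, being induced by the natural transformation $(H_Q^{(1)})^r \to H_Q^{(r)}$, is then just the identity of $n_QW(k)$, which rewrites the left column as the corollary claims.

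Surjectivity of the right-hand maps will follow from the existence of a $\bar{\lambda}$-basis at the reduced level. Given $\bar{\alpha} \in \bar{I}_Q^{(r)}$, Theorem \ref{thm_inv_nq} provides a decomposition $\bar{\alpha} = \sum_{d=0}^{2r} \bar{x}_d \bar{\lambda}^d$ with $\bar{x}_d \in \tld{W}^{-1}(Q,\gamma)/(n_Q)$. Each $\bar{\lambda}^d$ is by construction the reduction of $\lambda^d \in I_Q^{(r)}$, and each $\bar{x}_d$ admits an arbitrary lift $x_d \in \tld{W}^{-1}(Q,\gamma)$; then $\alpha = \sum_d x_d \lambda^d \in I_Q^{(r)}$ maps to $\bar{\alpha}$. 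The surjectivity of $J_Q^{(r)} \to \bar{J}_Q^{(r)}$ is completely parallel, using Corollary \ref{cor_bar_j} and the obvious lifts $\lambda^{\underline{d}} \in J_Q^{(r)}$ defined by $(h_1,\dots,h_r) \mapsto \prod_i \lambda^{d_i}(h_i)$. No step presents a real obstacle: this corollary is essentially bookkeeping that draws together the constancy of $n_QW$-valued invariants with the existence of the $\bar{\lambda}$-bases already established.
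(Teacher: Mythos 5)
Your proof is correct and follows essentially the same route as the paper: Proposition \ref{prop_constant} identifies the left-hand $\Inv$-groups with $n_QW(k)$, and the surjectivity of the right-hand maps is obtained by lifting the $\bar{\lambda}^d$ (respectively $\bar{\lambda}^{\underline{d}}$) basis from Theorem \ref{thm_inv_nq} and Corollary \ref{cor_bar_j} to the obvious preimages $\lambda^d$ and $\lambda^{\underline{d}}$. The only difference is that you spell out the lifting argument and the identification of the leftmost vertical arrow with the identity, which the paper leaves implicit.
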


\begin{proof}
    The $n_QW$ in the diagram come from Proposition \ref{prop_constant}. The surjectivities
    follows from Theorem \ref{thm_inv_nq} and Corollary \ref{cor_bar_j}, since the generators 
    $\bar{\lambda^d}\in \bar{I}_Q^{(r)}$ and $\bar{\lambda^{\underline{d}}}\in \bar{J}_Q^{(r)}$ are 
    the images of $\lambda^d\in I_Q^{(r)}$ and $\lambda^{\underline{d}}\in J_Q^{(r)}$.
\end{proof}

\subsection{Structure of $I_Q^{(r)}$}

The structure of $I_Q^{(r)}$ is slightly more complicated than that of $I_Q^{(r)}$, 
as it is not a free module. The (only) obstruction to being free comes from the 
following simple fact:

\begin{prop}\label{prop_nq_lambda}
    In $I_Q^{(r)}$, for any $d\in \N$ we have that 
    \[ n_Q\cdot \lambda^d = \left\{ \begin{array}{lc}
        0 & \text{if $d$ is odd} \\
        \binom{r}{d/2} & \text{if $d$ is even.}
    \end{array}  \right. \]
\end{prop}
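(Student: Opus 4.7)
The plan is to split the argument by the parity of $d$ and, in the even case, reduce to a computation on a diagonal form.

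For $d$ odd: by the $\Zd$-grading of the pre-$\lambda$-ring structure, $\lambda^d(h)\in W^{-1}(Q_K,\gamma_K)$ for any $h\in H_Q^{(r)}(K)$. Corollary \ref{cor_nq_trivial} then yields $n_Q\cdot \lambda^d(h)=0$ immediately.

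For $d$ even: the invariant $n_Q\lambda^d$ takes values in $n_QW$, so Proposition \ref{prop_constant} tells us it is already a constant invariant, and it suffices to compute its value on any convenient form. I take a diagonal form $h\simeq \fdiag{z_1,\dots,z_r}_{\gamma_K}$ with $z_i\in (Q_K)_0^\times$, using the standard diagonalizability of anti-hermitian forms in characteristic $\neq 2$. Multiplicativity of $\lambda_t$ under orthogonal sum reduces the computation to elementary factors, where $\lambda^0=1$, $\lambda^1(\fdiag{z}_\gamma)=\fdiag{z}_\gamma$, and by (\ref{eq_det_nrd}) $\lambda^2(\fdiag{z}_\gamma)=\fdiag{\Nrd_Q(z)}=\fdiag{-z^2}$ (since $\gamma(z)=-z$ for $z\in Q_0$), while higher $\lambda^d$ vanish by the top-exterior-power property of rank $\deg(Q)=2$ in the $\lambda$-structure of \cite{G2}. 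Thus $\lambda_t(\fdiag{z}_\gamma)=1+\fdiag{z}_\gamma t+\fdiag{-z^2}t^2$.

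When we multiply $\prod_i \lambda_t(\fdiag{z_i}_\gamma)$ by $n_Q$, every monomial containing at least one odd factor $\fdiag{z_i}_\gamma$ is killed, because $n_Q$ annihilates $W^{-1}(Q,\gamma)$ (Corollary \ref{cor_nq_trivial}) and hence, by associativity, annihilates any product containing such a factor. The surviving terms are indexed by subsets $T\subseteq\{1,\dots,r\}$ and contribute $n_Q\fdiag{\prod_{i\in T}(-z_i^2)}\,t^{2|T|}$; since each $-z_i^2=\Nrd_Q(z_i)$ is a value of the Pfister form $n_Q$ and the set of such values is a subgroup of $K^\times/(K^\times)^2$, the product $\prod_{i\in T}(-z_i^2)$ is again a value of $n_Q$, so $n_Q\fdiag{\,\cdot\,}=n_Q$. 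Summing gives
\[ n_Q\lambda_t(h)=n_Q(1+t^2)^r=n_Q\sum_{e\geq 0}\binom{r}{e}t^{2e}, \]
and extracting the $t^d$-coefficient for $d=2e$ establishes the identity. The one potentially delicate ingredient is the vanishing of $\lambda^d(\fdiag{z}_\gamma)$ for $d>2$; should this require more care, one can instead first handle $r=1$ by checking $n_Q\lambda^0=n_Q$ and $n_Q\lambda^2=n_Q$ on $\fdiag{z}_\gamma$ directly, and then bootstrap to general $r$ via the multiplicativity of $\lambda_t$ combined with Lemma \ref{lem_induction_free}.
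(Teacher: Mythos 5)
Your proof is correct and takes essentially the same route as the paper: odd $d$ via Corollary~\ref{cor_nq_trivial}, and even $d$ by diagonalizing $h$, using multiplicativity of $\lambda_t$, killing every term containing a factor $\fdiag{z_i}_\gamma$ because $n_Q$ annihilates $W^{-1}(Q,\gamma)$, and identifying the surviving $\fdiag{\Nrd_Q(z_i)}$-factors as similarity factors of $n_Q$. The only (harmless) difference is your extra appeal to Proposition~\ref{prop_constant}, which is redundant once you already invoke diagonalizability of anti-hermitian forms; the paper goes straight to a diagonal form.
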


\begin{proof}
    The statement when $d$ is odd follows from Corollary \ref{cor_nq_trivial}.

    When $d$ is even, we first treat the case $r=1$, where we only need to 
    look at $d=2$. Let $K/k$ be some extension, and $h=\fdiag{z}_{\gamma_K}\in H_Q^{(1)}(K)$.
    Then $\lambda^2(h)=\fdiag{\Nrd_{Q_K}(z)}$, and by definition $\Nrd_{Q_K}(z)$
    is represented by the $2$-fold Pfister form $n_{Q_K}$, so 
    $n_{Q_K}\lambda^2(h)=n_{Q_K}$.

    Now if $r\N^*$, and $h=\fdiag{z_1,\dots, z_r}_\gamma$, we have 
    \[ n_Q\lambda^{2d}(h) = \sum_{d_1+\cdots+d_r=d}n_Q \lambda^{d_1}(\fdiag{z_1})\cdots\lambda^{d_1}(\fdiag{z_1}). \]
    In the sum, all $d_i$ are $0$, $1$ or $2$; if one of them is $1$ then
    the term is $0$ from the case where $d$ is odd. When all of them are 
    $0$ or $2$, the case $r=1$ shows that the term is equal to $n_Q$.
    A simple counting argument shows that there are $\binom{r}{d}$ non-zero 
    terms.
\end{proof}

This leads us to introduce
\begin{equation}
    \foncdef{\chi^{(r)}}{\tld{W}^{-1}(Q,\gamma)^{2r+1}}{\tld{W}^{-1}(Q,\gamma)}
    {(x_0,\dots,x_{2r})}{\sum_{i=0}^r \binom{r}{i}x_{2i}.}
\end{equation}

We can then state the main result of this article.

\begin{thm}\label{thm_inv}
    The $\tld{W}^{-1}(Q,\gamma)$-module $I_Q^{(r)}$ is generated
    by $(\lambda^0,\dots,\lambda^{2r})$.

    If $(x_0,\dots,x_{2r})\in \tld{W}^{-1}(Q,\gamma)^{2r+1}$, then 
    the invariant $\alpha = \sum_{d=0}^{2r}x_d\lambda^i$ is constant 
    if and only if $x_d\in n_QW(k)$ for all $d>0$, and in that case 
    the constant is $\chi^{(r)}(x_0,\dots,x_{2r})$.
\end{thm}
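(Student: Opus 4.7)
The plan is to combine three ingredients already in place: the freeness of $\bar{I}_Q^{(r)}$ over $\tld{W}^{-1}(Q,\gamma)/(n_Q)$ with basis $(\bar{\lambda}^d)_{0\le d\le 2r}$ (Theorem \ref{thm_inv_nq}), the short exact sequence $0\to n_QW\to I_Q^{(r)}\to \bar{I}_Q^{(r)}\to 0$ from Corollary \ref{cor_sequence_I_J}, and the explicit formula for $n_Q\cdot\lambda^d$ in Proposition \ref{prop_nq_lambda}.

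To establish generation, I will start from an arbitrary $\alpha\in I_Q^{(r)}$ and look at its image $\bar{\alpha}\in \bar{I}_Q^{(r)}$. Theorem \ref{thm_inv_nq} yields a unique decomposition $\bar{\alpha}=\sum_{d=0}^{2r}\bar{x}_d\bar{\lambda}^d$; I would then lift each $\bar{x}_d$ to some $x_d\in \tld{W}^{-1}(Q,\gamma)$ and consider $\alpha-\sum x_d\lambda^d$. By Corollary \ref{cor_sequence_I_J}, this difference lies in $n_QW$, \ie\ it is a constant invariant with value in $n_QW(k)$. Since $\lambda^0$ is the constant invariant $\fdiag{1}$, this residual constant can be absorbed into $x_0$, producing an expression $\alpha=\sum_d x_d\lambda^d$.

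For the second assertion, Proposition \ref{prop_constant} tells us that $\alpha=\sum_d x_d\lambda^d$ is constant if and only if $\bar{\alpha}$ is constant in $\bar{I}_Q^{(r)}$. A constant invariant in $\bar{I}_Q^{(r)}$ corresponds to an element $c\in \tld{W}^{-1}(Q,\gamma)/(n_Q)$, and is represented in the free basis as $c\bar{\lambda}^0$; by uniqueness of the decomposition this forces $\bar{x}_d=0$ for all $d>0$, meaning $x_d\in n_Q\tld{W}^{-1}(Q,\gamma)$. Invoking Corollary \ref{cor_nq_trivial}, this simplifies to $x_d\in n_QW(k)$. When this holds, writing $x_d=n_Qy_d$ for $d>0$ and applying Proposition \ref{prop_nq_lambda}, the odd-degree contributions vanish and each even one yields $x_{2i}\lambda^{2i}=y_{2i}(n_Q\lambda^{2i})=\binom{r}{i}n_Qy_{2i}=\binom{r}{i}x_{2i}$; summing gives the constant value $\sum_{i=0}^r\binom{r}{i}x_{2i}=\chi^{(r)}(x_0,\dots,x_{2r})$.

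No substantial obstacle remains, as all the hard work has been carried out in the preceding results. The only points that require a little care are the use of Corollary \ref{cor_nq_trivial} to replace $n_Q\tld{W}^{-1}(Q,\gamma)$ by $n_QW(k)$, and the correct reading of Proposition \ref{prop_nq_lambda} as identifying $n_Q\lambda^{2i}$ with the constant invariant $\binom{r}{i}n_Q\in n_QW(k)\subset I_Q^{(r)}$, so that the final summation collapses cleanly to $\chi^{(r)}$.
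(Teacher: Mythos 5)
Your proposal is correct and follows essentially the same route as the paper: the exact sequence of Corollary \ref{cor_sequence_I_J} together with the freeness of $\bar{I}_Q^{(r)}$ (Theorem \ref{thm_inv_nq}) gives generation, Proposition \ref{prop_constant} reduces constancy to $\bar{I}_Q^{(r)}$ where the free basis and Corollary \ref{cor_nq_trivial} force $x_d \in n_QW(k)$ for $d>0$, and Proposition \ref{prop_nq_lambda} then collapses the sum to $\chi^{(r)}$. The only difference is cosmetic: you spell out the lifting-and-absorption step for generation that the paper leaves implicit.
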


\begin{proof}
    From the exact sequence of $\tld{W}^{-1}(Q,\gamma)$-modules
    \[ 0 \to n_QW(k) \to I_Q^{(r)} \to \bar{I}_Q^{(r)} \to 0 \]
    of Corollary \ref{cor_sequence_I_J}, and the fact that 
    $\bar{I}_Q^{(r)}$ is generated by the $\bar{\lambda}^d$,
    we deduce that $I_Q^{(r)}$ is generated by the $\lambda^d$.

    Let $\alpha = \sum_{d=0}^{2r}x_d\lambda^d \in I_Q^{(r)}$,
    and let $\bar{\alpha}$ be its image in $\bar{I}_Q^{(r)}$.
    Bu Proposition \ref{prop_constant}, $\alpha$ is constant 
    if and only if $\bar{\alpha}$ is, and since $\bar{I}_Q^{(r)}$
    is free and the constant invariants in $\bar{I}_Q^{(r)}$ 
    are the submodule generated by $\bar{\lambda}^0$, $\bar{\alpha}$
    is constant if and only if the class of $x_d$ in $\tld{W}^{-1}(Q,\gamma)$
    is $0$ for $d>0$. By Corollary \ref{cor_nq_trivial}, this 
    means that $x_d\in n_Q Q(k)$.

    If we write $x_d = n_Qy_d$ for each $d>0$, then by Proposition
    \ref{prop_nq_lambda} we have 
    \begin{align*}
        \alpha &= x_0 + \sum_{d=1}^{2r}y_d\cdot n_Q\lambda^d \\
        &= x_0 + \sum_{i=1}^{r}y_{2i}\cdot n_Q\binom{r}{i} \\ 
        &= \chi^{(r)}(x_0,\dots,x_{2r}).
    \end{align*}
\end{proof}

This theorem gives a presentation of $I_Q^{(r)}$ as a $\tld{W}^{-1}(Q,\gamma)$-module:
\begin{equation}
    I_Q^{(r)} = \left\langle \lambda^0,\dots,\lambda^{2r} \,|\, \forall 0\ppq i\ppq r, n_Q\lambda^{2i} = n_Q\binom{r}{i}\lambda^0 \right\rangle.
\end{equation}

Of course when $Q$ is split $n_Q=0$ and the module is free.

\bibliographystyle{plain}
\bibliography{witt_invariants_quaternionic_forms}

\end{document}